\documentclass[12pt]{amsart}
\usepackage{amsmath}
\usepackage{eucal}
\usepackage{amssymb}
\usepackage[all]{xy}
\usepackage{longtable}
\usepackage[osf,sc]{mathpazo}
\usepackage[titletoc]{appendix}
\usepackage{color}
\usepackage{multirow,bigdelim}
\usepackage{stmaryrd}
\usepackage{verbatim}
\usepackage{mathrsfs}
\usepackage{tikz-cd}

\allowdisplaybreaks

\setlength{\topmargin}{0truein} \setlength{\headheight}{.35truein}
\setlength{\headsep}{.25truein} \setlength{\textheight}{9.25truein}
\setlength{\footskip}{.25truein} \setlength{\oddsidemargin}{0truein}
\setlength{\evensidemargin}{0truein}
\setlength{\textwidth}{6.5truein} \setlength{\voffset}{-0.625truein}
\setlength{\hoffset}{0truein}

\newtheorem{theorem}[equation]{Theorem}
\newtheorem{lemma}[equation]{Lemma}

\newtheorem{corollary}[equation]{Corollary}

\newtheorem{definition}[equation]{Definition}

\newtheorem{example}[equation]{Example}

\theoremstyle{remark}
\newtheorem{remark}[equation]{Remark}

\numberwithin{equation}{subsection}

\allowdisplaybreaks[1]


\DeclareMathAlphabet{\matheur}{U}{eur}{m}{n}

 \DeclareMathOperator{\Lie}{Lie}
 
\DeclareMathOperator{\Mat}{Mat} 
\DeclareMathOperator{\End}{End}

 \DeclareMathOperator{\im}{Im}
 
\DeclareMathOperator{\Ext}{Ext}  
\DeclareMathOperator{\rank}{rank}

\newcommand{\ok}{\overline{k}}

\newcommand{\tr}{\mathrm{tr}}

\begin{document}

\title[]{{\large{L\MakeLowercase{inear equations on }D\MakeLowercase{rinfeld modules}}}}

\author{Yen-Tsung Chen}
\address{Department of Mathematics, National Tsing Hua University, Hsinchu City 30042, Taiwan
  R.O.C.}

\email{ytchen.math@gmail.com}

\thanks{The author was partially supported by Prof. C.-Y. Chang's MOST Grant 107-2628-M-007-002-MY4}

\subjclass[2010]{Primary 11G09, 11R58}

\date{\today}

\begin{abstract} 
    Let $L$ be a finite extension of the rational function field over a finite field $\mathbb{F}_q$ and $E$ be a Drinfeld module defined over $L$. Given finitely many elements in $E(L)$, this paper aims to prove that linear relations among these points can be characterized by solutions of an explicitly constructed system of homogeneous linear equations over $\mathbb{F}_q[t]$. As a consequence, we show that there is an explicit upper bound for the size of the generators of linear relations among these points. This result can be regarded as an analogue of a theorem of Masser for finitely many $K$-rational points on an elliptic curve defined over a number field $K$.
\end{abstract}

\keywords{}

\maketitle

\tableofcontents

\section{Introduction}
\subsection{Motivation}
    Let $K$ be a number field and $E$ be an elliptic curve defined over $K$.
    Given finitely many non-zero distinct $K$-rational points $P_1,\dots,P_\ell\in E(K)$, how can we decide if they are linearly dependent in the sense that there exist integers $a_1,\dots,a_\ell$, not all zero, such that $a_1P_1+\cdots+a_\ell P_\ell=0$ on $E$? Furthermore, one may naturally ask whether we can determine all the linear relations among these points.
    This question was answered by Masser in \cite{Mas88}, using techniques from the geometry of numbers. 
    To be more precise, we embed our elliptic curves in the projective space $\mathbb{P}^2$ by choosing a Weierstrass model 
    \[
        Y^2Z=4X^3-g_2XZ^2-g_3Z^3.
    \]
    Thus, the invariants $g_2,g_3$ lie in the number field $K$. Let $\Hat{h}$ be the Neron-Tate height on $E(K)$. We set $\eta_1:=\inf_{P\in E(K)\setminus E(K)_\mathrm{tor}}\{\Hat{h}(P)\}\neq 0$ and $\omega:=|E(K)_{\mathrm{tor}}|$. For $\mathbf{x}:=(x_1,\dots,x_\ell)\in\mathbb{Z}^\ell$, we define $|\mathbf{x}|:=\max_{i=1}^\ell\{|x_i|\}$. Then Masser's theorem is stated as follows.
    
    \begin{theorem}[{\cite[Thm.~E]{Mas88}}]\label{Thm:Masser's_Thm}
        Let $K/\mathbb{Q}$ be a finite extension. Let $E$ be an elliptic curve defined over $K$ and $P_1,\dots,P_\ell\in E(K)$ be distinct non-zero points with Neron-Tate heights at most $\eta_2\geq\eta_1$. If we set
        $$G:=\{(a_1,\dots,a_\ell)\in\mathbb{Z}^\ell\mid\sum_{i=1}^\ell a_iP_i=0\}\subset\mathbb{Z}^\ell,$$
        then there exists a set $\{\mathbf{m}_1,\dots,\mathbf{m}_\nu\}\subset G$ such that $G=\mathrm{Span}_{\mathbb{Z}}\{\mathbf{m}_1,\dots,\mathbf{m}_\nu\}$
        with bounded size
        $$|\mathbf{m}_i|\leq\ell^{\ell-1}\omega(\eta_2/\eta_1)^{(\ell-1)/2}$$
        for each $1\leq i\leq\nu$.
    \end{theorem}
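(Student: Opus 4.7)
I would first convert the problem into a question about integer lattices. Define the positive semi-definite quadratic form $Q(\mathbf{a}) := \hat h(\sum_{i=1}^\ell a_i P_i)$ on $\ZZ^\ell$, and let $G' := \{\mathbf{a}\in\ZZ^\ell \mid Q(\mathbf{a})=0\}$. By the basic properties of $\hat h$, this is the preimage of $E(K)_{\mathrm{tor}}$, so $G \subset G'$ with $[G':G] \leq \omega$. By the theory of elementary divisors, any basis of $G'$ with entries of size at most $B$ produces a basis of $G$ with entries of size at most $\omega B$, so it suffices to bound a basis of $G'$ by $\ell^{\ell-1}(\eta_2/\eta_1)^{(\ell-1)/2}$.

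\textbf{Geometry of numbers on $\bar\Gamma$.} Next, I would study the quotient lattice $\bar\Gamma := \ZZ^\ell/G'$, which is free of rank $r \leq \ell - 1$ (assuming $G \neq 0$, the only non-trivial case). The form $Q$ descends to a positive definite Euclidean structure on $\bar\Gamma \otimes \RR$. The classes $[P_i]$ generate $\bar\Gamma$ with $\|[P_i]\| \leq \sqrt{\eta_2}$, while every non-zero element of $\bar\Gamma$ has norm $\geq \sqrt{\eta_1}$ by the definition of $\eta_1$. Thus $\sqrt{\eta_1} \leq \lambda_1(\bar\Gamma)$ and $\lambda_r(\bar\Gamma) \leq \sqrt{\eta_2}$, and Minkowski's theorem on successive minima furnishes a basis $v_1,\ldots,v_r$ of $\bar\Gamma$ whose vectors are comparable to the successive minima, say $c(r)^{-1}\sqrt{\eta_1} \leq \|v_j\| \leq c(r)\sqrt{\eta_2}$ for constants depending only on $r$.

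\textbf{Kernel via minors.} Writing $[P_i] = \sum_j a_{ij} v_j$ with $a_{ij} \in \ZZ$, standard estimates for Minkowski-reduced bases (equivalently, inverting the Gram matrix of the $v_j$) yield $|a_{ij}| \leq c'(r)\sqrt{\eta_2/\eta_1}$. The lattice $G'$ coincides with the integer kernel of the $r\times\ell$ matrix $A := (a_{ji})$. A generating set of this kernel is produced by Cramer's rule: each generator is, up to signs, the vector of $r\times r$ minors of $A$ obtained by deleting $r$ columns. By Hadamard's inequality each such minor is bounded by $r^{r/2}(c'\sqrt{\eta_2/\eta_1})^r$; absorbing the $r$-dependent constants and combinatorial factors into $\ell^{\ell-1}$ (using $r \leq \ell - 1$), and combining with the factor $\omega$ from passing from $G'$ back to $G$, gives the announced bound.

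\textbf{Main obstacle.} The delicate step is producing a basis of $\bar\Gamma$ whose vectors are simultaneously short (comparable to $\sqrt{\eta_2}$) and bounded away from the origin (at least $\sqrt{\eta_1}$), since only with both bounds do the coefficients $a_{ij}$, and hence the minors of $A$, admit clean estimates in $\sqrt{\eta_2/\eta_1}$. Tracking the constants through Minkowski's second theorem, Cramer's rule, and Hadamard's inequality so that the final combinatorial factor collapses precisely to $\ell^{\ell-1}$ requires careful optimization at each step; this is where the proof is tight and where any generalization, such as to Drinfeld modules, will require a height function and geometry-of-numbers machinery with analogous control.
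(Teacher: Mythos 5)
A point of orientation first: the paper does not prove Theorem~\ref{Thm:Masser's_Thm} at all --- it is quoted from Masser \cite{Mas88} as motivation, and the author explicitly remarks that the proof of the Drinfeld analogue (Theorem~\ref{Thm:Main_Thm}) follows a different route (Anderson dual $t$-motives, a Frobenius difference equation, and Thunder's Siegel lemma in place of Minkowski's theorems). So there is no in-paper proof to compare against; what you have written is essentially a reconstruction of Masser's own geometry-of-numbers argument, and your overall strategy --- pass to $G'=\{\mathbf{a}\in\ZZ^\ell:\hat{h}(\sum a_iP_i)=0\}$, put the induced positive definite form on the free quotient $\bar\Gamma=\ZZ^\ell/G'$, trap a reduced basis between $\sqrt{\eta_1}$ and $\sqrt{\eta_2}$, and recover $G'$ as the integer kernel of the coefficient matrix --- is the correct one and is faithful to the source. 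In the paper's Drinfeld setting, the role of your Minkowski-plus-Cramer step is played instead by Theorem~\ref{Thm:Function_Fields_BV} applied to the linear system of Theorem~\ref{Thm:Reduction_2}, while your first reduction step reappears almost verbatim as Lemma~\ref{Lem:Lattice_2}.

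That said, three steps in your sketch are asserted rather than proved, and each hides a genuine lemma. (i) ``Any basis of $G'$ of size at most $B$ produces a basis of $G$ of size at most $\omega B$'': since the exponent of $G'/G$ divides $\omega$, the vectors $\omega\beta_i$ form a bounded basis of a \emph{finite-index sublattice} of $G$, not of $G$ itself; converting this into a basis of $G$ without losing control requires Cassels' lemma \cite[Lem.~8, p.~135]{Cas59} that an overlattice of a lattice with a bounded basis again admits a bounded basis --- exactly the statement whose $\FF_q[t]$-analogue is Lemma~\ref{Lem:Lattice_2} --- and ``the theory of elementary divisors'' alone does not give it, because the elementary-divisor basis of $G'$ adapted to $G$ need not be the short basis you started with. (ii) The Cramer vectors of maximal minors span $\ker A$ over $\QQ$ but in general only generate a finite-index sublattice over $\ZZ$ (already for the $1\times 2$ matrix with both entries equal to $2$); here they do generate $\ker A$ because $A:\ZZ^\ell\to\ZZ^r$ is surjective, so its $r\times r$ minors are coprime, but that implication needs an argument (or yet another appeal to the Cassels lemma). (iii) The specific constant is not derived: your chain of estimates yields $c(r)\,\omega\,(\eta_2/\eta_1)^{r/2}$ with $c(r)$ accumulating from Minkowski's second theorem, the reduction inequalities, and Hadamard, and you have not verified that these collapse to $\ell^{\ell-1}$ (you also tacitly use $\eta_2\geq\eta_1$ to replace $r$ by $\ell-1$ in the exponent). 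None of these is fatal, but as written the proposal establishes a bound of the shape $c(\ell)\,\omega\,(\eta_2/\eta_1)^{(\ell-1)/2}$ rather than the precise bound stated.
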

    
    In fact, he generalized these results and methods to connected commutative algebraic groups defined over a number field $K$. We refer the reader to \cite{Mas88} for details. This paper aims to study an analogue of Masser's result for Drinfeld modules.
\subsection{The main result}
    Let $\mathbb{F}_q$ be a fixed finite field with $q$ elements, for $q$ a power of a prime number $p$. 
    Let $\mathbb{P}^1$ be the projective line defined over $\mathbb{F}_q$ with a fixed point at infinity $\infty\in\mathbb{P}^1(\mathbb{F}_q)$. 
    Let $A$ be the ring of algebraic functions regular away from $\infty$ and $k$ be its fraction field. Let $k_\infty$ be the completion of $k$ at $\infty$ and $\mathbb{C}_\infty$ be the completion of a fixed algebraic closure of $k_\infty$. Let $\theta$ be a variable. We identify $A$ with the polynomial ring $\mathbb{F}_q[\theta]$ and $k$ with the rational function field $\mathbb{F}_q(\theta)$. Throughout this paper, we fix $\ok$ to be an algebraic closure of $k$ inside $\mathbb{C}_\infty$.
    
    
    Let $R$ be an $A$-algebra and $\tau:=(x\mapsto x^q):R\to R$ be the Frobenius $q$-th power operator. We define $R[\tau]$ to be the twisted polynomial ring in $\tau$ over $R$ subject to the relation $\tau\alpha=\alpha^q\tau$ for $\alpha\in R$. Let $t$ be a new variable and let $\mathbb{F}_q[t]$ be the polynomial ring in $t$ over $\mathbb{F}_q$. A Drinfeld $\mathbb{F}_q[t]$-module over $R$ is a pair $E=(\mathbb{G}_{a/R},\phi)$ where $\mathbb{G}_{a/R}$ is the additive group scheme defined over $R$ and $\phi$ is an $\mathbb{F}_q$-algebra homomorphism $\phi:\mathbb{F}_q[t]\to R[\tau]$ so that $\im(\phi)\not\subset R$, and  $\phi_t=\theta+\kappa_{1}\tau+\cdots+\kappa_{r}\tau^r\in R[\tau]$ with $\kappa_r\neq 0$. The rank of $E$ is defined to be $r$. Let $E=(\mathbb{G}_{a/R},\phi)$ and $E'=(\mathbb{G}_{a/R},\phi)$ be two Drinfeld $\mathbb{F}_q[t]$-modules defined over $R$. An element $u\in R[\tau]$ is called a $R$-morphism from $E$ to $E'$ if $u\phi_t=\phi'_tu$, and it is called an $R$-isomorphism if there is another $R$-morphism $u'$ from $E'$ to $E$ such that $uu'=u'u=1$. By abuse of notation, if $u$ is a $R$-isomorphism from $E$ to $E'$, we will denote by $u(E):=E'$.
    
    Let $L/k$ be a finite extension and $E=(\mathbb{G}_{a/L},\phi)$ be a Drinfeld $\mathbb{F}_q[t]$-module defined over $L$. We denoted by $E(L)$ the $\mathbb{F}_q[t]$-module whose underlying space is the additive group $L$, and the $\mathbb{F}_q[t]$-module structure arises from $\phi$. It is known in \cite{Den92a} that $E(L)$ is never finitely generated and thus the direct analogue of the Mordell-Weil theorem fails. Later on, Poonen proved in \cite{Poo95} that $E(L)$ is isomorphic to the direct sum of a free $\mathbb{F}_q[t]$-module of countably infinite rank $\aleph_0$ with a finite $\mathbb{F}_q[t]$-torsion submodule $E(L)_\mathrm{tor}$. An analogue of Masser's theorem for Drinfeld modules was considered by Denis in \cite[Appendix~2]{Den92a}, using his theory of the canonical height on Drinfeld modules \cite{Den92b}. His approach was inspired by Masseer's study on abelian varieties \cite{Mas81}. In fact, Denis's result works for a more general setting of $t$-modules.
    
    In the present paper, we adopt a different method to study an analogue of Masser's theorem for Drinfeld modules. Our strategy relies on the theory of Anderson dual $t$-motive and the investigation of Frobenius difference equations. Furthermore, given finitely many algebraic points in $E(L)$, we prove that all the $\mathbb{F}_q[t]$-linear relations among these points can be captured by the solutions of an explicitly constructed homogeneous linear system over $\mathbb{F}_q[t]$. We shall emphasize that the former $\mathbb{F}_q[t]$-action on algebraic points in $E(L)$ arises from $\phi$, while the later $\mathbb{F}_q[t]$-action in the homogeneous linear system is the usual scalar multiplication. Based on this characterization, we established an analogue of Theorem~\ref{Thm:Masser's_Thm} for Drinfeld modules, which is different from the result of Denis in \cite[Appendix~2]{Den92a}.
    
    To state our result, we introduce a divisor $D_E$ over $L$ for a given Drinfeld $\mathbb{F}_q[t]$-module $E=(\mathbb{G}_{a/L},\phi)$ with $\phi_t=\theta+\kappa_1\tau+\cdots+\kappa_r\tau^r\in L[\tau]$ (see \eqref{Eq:slope_divisor} for the detailed description). Note that the divisor $D_E$ is different from the one defined in \cite[Sec.~2]{Tag93}, and the definition of $D_E$ is related to the slope of the Newton polygon of the polynomial 
    \begin{equation}\label{Eq:Newton_Polygon_of_Drinfeld}
        \mathcal{P}_E(x):=(t-\theta)x+\kappa_1x^q+\cdots+\kappa_rx^{q^r}\in L[t][x].
    \end{equation}
    Moreover, $\deg(D_E)$ is invariant under $L$-isomorphisms in the following sense: Let $u$ be an $L$-isomorphism of $E$. Then $\deg(D_E)=\deg(D_{u(E)})$. Thus, $\deg(D_E)$ is well-defined on the $L$-isomorphism classes of Drinfeld $\mathbb{F}_q[t]$-modules defined over $L$ (see Lemma~\ref{Lem:Invariant}).
    
    For $B=(B_{ij})\in\Mat_{m\times n}(\mathbb{F}_q[t])$, we set $\deg_t(B):=\max_{i,j}\{\deg_t(B_{ij})\}$. Let $n\in\mathbb{Z}_{>0}$ and $\mathbf{x}=(x_1,\dots,x_n)\in L^n\setminus\{0\}$. We have an associated divisor $\mathrm{div}(\mathbf{x})$ over $L$ (see Definition~\ref{Def:Height}). The height of $\mathbf{x}$ is given by $h(\mathbf{x}):=-\deg(\mathrm{div}(\mathbf{x}))/[L:k]$. Let $M_L$ be the set of places of $L$. For each $v\in M_L$ we fix a uniformizer $\varpi_v$ of $v$ in $L$ and an additive valuation $\mathrm{ord}_v(\cdot)$ such that $\mathrm{ord}_v(\varpi_v)=1$.
    For two divisors $D_1:=\sum_{v\in M_L}a_v\cdot v$ and $D_2:=\sum_{v\in M_L}b_v\cdot v$, we define the divisor
    \begin{equation}\label{Eq:max_divisors}
        D_1\vee D_2:=\sum_{v\in M_L}\max\{a_v,b_v\}\cdot v.
    \end{equation}
    For a divisor $D$ over $L$, we set 
    \begin{equation}\label{Eq:Riemann_Roch_Space}
        \mathcal{L}(D):=\{f\in L^\times\mid \mathrm{div}(f)+D\geq 0\}\cup\{0\}.
    \end{equation}
    
    
    Our first result, restated as Theorem~\ref{Thm:Reduction_2}, asserts that all $\mathbb{F}_q[t]$-linear relations among finitely elements in $E(L)$ of a given Drinfeld $\mathbb{F}_q[t]$-module $E$ defined over $L$ can be recovered from a system of linear equations over $\mathbb{F}_q[t]$.
    \begin{theorem}\label{Thm:Intro_Thm1}
        Let $L/k$ be a finite extension. Let $E=(\mathbb{G}_{a/L},\phi)$ be a Drinfeld $\mathbb{F}_q[t]$-module defined over $L$ and $P_1,\dots,P_\ell\in E(L)$ be distinct non-zero elements in $L$. Let
        $$D:=\left(-\mathrm{div}(P_1,\dots,P_\ell)\right)\vee D_E$$
        and $d:=\dim_{\mathbb{F}_q}\mathcal{L}(D)$.
        Then there exists an explicitly constructed matrix $B$ with entries in $\mathbb{F}_q[t]$ and $\deg_t(B)\leq 1$ such that the canonical projection
        \begin{align*}
            \pi:\{\mathbf{x}\in\mathbb{F}_q[t]^{(d+\ell)}\mid B\mathbf{x}^\tr=0\}&\twoheadrightarrow \{(a_1,\dots,a_\ell)\in\mathbb{F}_q[t]^\ell\mid \sum_{i=1}^\ell\phi_{a_i}(P_i)=0\}\\
            (g_1,\dots,g_d,a_1,\dots,a_\ell)&\mapsto (a_1,\dots,a_\ell)
        \end{align*}
        is a well-defined surjective $\mathbb{F}_q[t]$-module homomorphism.
    \end{theorem}
    
    We provide an example of detailed calculations of the matrix $B$ in Example~\ref{Ex:Carlitz}. As a consequence of Theorem~\ref{Thm:Intro_Thm1}, our second result, can be viewed as an analogue of Masser's theorem for Drinfeld $\mathbb{F}_q[t]$-modules.
    \begin{theorem}\label{Thm:Main_Thm}
        Let $L/k$ be a finite extension. Let $E=(\mathbb{G}_{a/L},\phi)$ be a Drinfeld $\mathbb{F}_q[t]$-module defined over $L$ and $P_1,\dots,P_\ell\in E(L)$ be distinct non-zero elements in $E(L)$. If we set
        $$G:=\{(a_1,\dots,a_\ell)\in\mathbb{F}_q[t]^\ell\mid \sum_{i=1}^\ell\phi_{a_i}(P_i)=0\}\subset\mathbb{F}_q[t]^\ell,$$
        then there exists a set $\{\mathbf{m}_1,\dots,\mathbf{m}_\nu\}\subset G$ with bounded degree $\deg_t(\mathbf{m}_i)<d+\ell$ such that $$G=\mathrm{Span}_{\mathbb{F}_q[t]}\{\mathbf{m}_1,\dots,\mathbf{m}_\nu\}.$$
        In addition, we have $\deg(D)\geq [L:k]\cdot h(P_1,\dots,P_\ell)$ and the equality holds if the divisor $-\left(\mathrm{div}(P_1,\dots,P_\ell)+D_E\right)$ is effective. Moreover, $\deg(D)$ is invariant under $L$-isomorphisms of Drinfeld $\mathbb{F}_q[t]$-modules in the following sense: Let $u$ be an $L$-isomorphism of $E$. If we set
        $$u(D)=\left(-\mathrm{div}(u(P_1),\dots,u(P_\ell))\right)\vee D_{u(E)},$$
        then we have $\deg(D)=\deg(u(D))$.
    \end{theorem}

    At the writing of this paper, it is not clear for the author whether this upper bound is optimal or not (see Remark.~\ref{Rmk:Optimality}). The author hope to tackle this question in a near future.
    
    \begin{remark}\label{Rmk:Endomorphism_Relations}
        In this remark, we explain how to deal with linear relations among algebraic points on Drinfeld $\mathbb{F}_q[t]$-modules with endomorphism ring larger than $\mathbb{F}_q[t]$. Let $L/k$ be a finite extension. Let $E=(\mathbb{G}_{a/L},\phi)$ be a Drinfeld $\mathbb{F}_q[t]$-module defined over $L$ and $P_1,\dots,P_\ell\in E(L)$ be distinct non-zero elements in $E(L)$. Let 
        \[
            \End(E):=\{u\in\ok[\tau]\mid u\phi_t=\phi_tu\}.
        \]
        Then the $\mathbb{F}_q[t]$-algebra structure on $\End(E)$ comes from
        \begin{align*}
            \phi:\mathbb{F}_q[t]&\to\End(E)\\
            a&\mapsto\phi_a.
        \end{align*}
        It is known that $\End(E)$ is an $\mathbb{F}_q[t]$-order and in particular it is free of finite rank $s$ over $\mathbb{F}_q[t]$ with $\mathbb{F}_q[t]$-basis $\{u_1,\dots,u_s\}$. One can see easily that $\End(E)$-linear relations among $P_1,\dots,P_\ell$ are equivalent to $\mathbb{F}_q[t]$-linear relations among $\{u_i(P_j)\mid1\leq i\leq s,~1\leq j\leq\ell\}$. In other words, Theorem~\ref{Thm:Intro_Thm1} and Theorem~\ref{Thm:Main_Thm} can be applied to determine $\End(E)$-linear relations among these points. 
    \end{remark}
    
    Let $K_E:=\End(E)\otimes_{\mathbb{F}_q[t]}\mathbb{F}_q(t)$. Then $K_E/\mathbb{F}_q(t)$ is a field extension of degree $s$. Note that the map
    \begin{align*}
        \partial:\End(E)&\to\mathbb{C}_\infty\\
        u:=\sum_{i=0}^mc_i\tau^i&\mapsto\partial u:=c_0
    \end{align*}
    extends to $K_E$ naturally by setting $\partial(\sum_{i=1}^mu_i\otimes \alpha_i):=\sum_{i=1}^m\partial(u_i)\cdot\alpha_i\mid_{t=\theta}$. Thus, $K_E$ can be viewed as a subfield of $\mathbb{C}_\infty$ via $\partial$. We denote by $\mathcal{K}_E:=\partial(K_E)\subset\mathbb{C}_\infty$. In analogy with classical transcendence results, Yu \cite{Yu97} established the analogue of Baker’s celebrated theorem on the linear independence of logarithms for Drinfeld logarithms at algebraic points. Moreover, Chang and Papanikolas proved in \cite[Thm.~1.1.1]{CP12} that if $E$ is a Drinfeld $\mathbb{F}_q[t]$-module defined over $\overline{k}$, then all algebraic relations among the logarithm of $E$ at algebraic points are those coming from  their $\mathcal{K}_E$-linear relations. Combining their result with Theorem~\ref{Thm:Main_Thm} and Remark~\ref{Rmk:Endomorphism_Relations}, one deduces a practical way to examine whether logarithms of Drinfeld $\mathbb{F}_q[t]$-modules at algebraic points are algebraically independent. We will give the precise statement as well as an example in Section~4.2.

\subsection{Strategy and organization}
    The key ingredients of our approach can be divided into two parts. The first part uses the theory of Anderson dual $t$-motives. More precisely, we identify algebraic points in a given Drinfeld $\mathbb{F}_q[t]$-module with elements in the quotient of its associated Anderson dual $t$-motive using a specific $t$-frame. Then we prove that finding $\mathbb{F}_q[t]$-linear relations among these algebraic points is equivalent to solving a particular Frobenius difference equation. The details of this part is given in Lemma~\ref{Lem:Reduction_1}.
    
    In the second part, we analyze the solution space of the Frobenius difference equation in question. In particular, we show that all the possible solutions can be described by polynomials in the variable $t$ with coefficients in a finite dimensional $\mathbb{F}_q$-vector space $\mathcal{L}(D)$, where $D$ is the divisor given in Theorem~$\ref{Thm:Intro_Thm1}$. In other words, we prove that the solution space is free of finite rank over $\mathbb{F}_q[t]$. Thus it can be characterised by a system of linear equations over $\mathbb{F}_q[t]$, and Theorem~\ref{Thm:Intro_Thm1} follows immediately. To prove Theorem~\ref{Thm:Main_Thm}, we apply a version of Siegel's lemma by Thunder \cite{Thu95} to deduce the desired upper bound. Note that Thunder's result can be seen as a function field analogue of the result by Bombieri and Vaaler \cite{BV83}.
    
    As another application of Theorem~\ref{Thm:Intro_Thm1}, we adopt a different analysis on the Frobenius difference equation obtained in the first step. It leads to a sufficient condition of algebraic points on a given Drinfeld module being $\mathbb{F}_q[t]$-linearly-linearly independent. Let $L/k$ be a finite extension. Let $S\subset M_L$ be a finite collection of places in $L$ which includes all places lying above $\infty\in M_k$. Let 
    $$\mathcal{O}_S:=\{\alpha\in L\mid \mathrm{ord}_v(\alpha)\geq 0~\mbox{for all}~v\not\in S\}\subset L$$
    be the set of $S$-integers in $L$. Then we have the following result, which will be stated as Theorem~\ref{Thm:Linearly_Independence} later.
    
    \begin{theorem}\label{Thm:Intro_Linearly_Independence}
        Let $E=(\mathbb{G}_{a/L},\phi)$ be a Drinfeld module $\mathbb{F}_q[t]$-module defined over $L$ with $\phi_t=\theta+\kappa_1\tau+\cdots+\kappa_r\tau^r\in\mathcal{O}_S[\tau]$ and $\kappa_r\in L^\times\cap\overline{\mathbb{F}}_q^\times$. Let $P_1,\dots,P_\ell\in E(L)$. Suppose that
        \begin{enumerate}
            \item $P_1,\dots,P_\ell$ are linearly independent over $\mathbb{F}_q$.
            \item $\mathrm{ord}_v(P_i)>0$ for all $1\leq i\leq\ell$, if $v\in S$.
            \item $\mathrm{ord}_v(P_i)\geq 1-q^r$ for all $1\leq i\leq\ell$, if $v\not\in S$.
        \end{enumerate}
        Then
        \[
            \rank_{\mathbb{F}_q[t]}\mathrm{Span}_{\mathbb{F}_q[t]}\{P_1,\dots,P_\ell\}=\ell.
        \]
        In particular, we have
        \[
            \rank_{\mathbb{F}_q[t]}E(L)=\aleph_0.
        \]
    \end{theorem}
    
    \begin{remark}
        Note that this result gives another proof of a part of \cite[Thm.~1]{Poo95} for a family of Drinfeld $\mathbb{F}_q[t]$-modules without proving the tameness of $E(L)$. An example of Theorem~\ref{Thm:Intro_Linearly_Independence} is given in Example~\ref{Ex:Linearly_Independent}.
    \end{remark}

    The organization of this paper is given as follows. In Section~2, we first recall a variant of Siegel's lemma for function fields \cite{Thu95} which can be regarded as an analogue of Bombieri and Vaaler's theorem \cite{BV83}. Then we follow the exposition of \cite{NP21} closely to review the theory of Anderson dual $t$-motives. In Section~3, we first show that to find linear relations among algebraic points on a given Drinfeld $\mathbb{F}_q[t]$-module is equivalent to find the solutions of a specific Frobenius difference equation. We mention that some technical arguments in the proof of this part are rooted in \cite{Cha16,KL16,Ho20}. Then we analyze the solutions of the Frobenius difference equation in question and we successfully construct an $\mathbb{F}_q[t]$-linear system whose solutions completely recover the solutions of the desired Frobenius difference equation. Finally we apply a Siegel's lemma for function fields to obtain an upper bound of the height of some $\mathbb{F}_q[t]$-linearly independent solutions of the $\mathbb{F}_q[t]$-linear system in question. In Section~4, we give two applications of our results. The first one is a sufficient condition of the linear independence of algebraic points on a specific family of Drinfeld $\mathbb{F}_q[t]$-modules. The second one is a practical way for determining algebraic relations among Drinfeld logarithms at algebraic points.
    
    
\section{Preliminaries}
\subsection{Table of Symbols}
    \begin{itemize}
		\setlength{\leftskip}{0.8cm}
        \setlength{\baselineskip}{18pt}
		\item[$\mathbb{F}_q$] :=  A fixed finite field with $q$ elements, for $q$ a power of a prime number $p$.
		\item[$\infty$] :=  A fixed closed point on the projective line $\mathbb{P}^1(\mathbb{F}_q)$.
		\item[$A$] :=  $\mathbb{F}_q[\theta]$, the rational functions of $\mathbb{P}^1$ which are regular away from $\infty$.
		\item[$k$] :=  $\mathbb{F}_q(\theta)$, the function field of $\mathbb{P}^1$.
		\item[$k_\infty$] :=  The completion of $k$ at the place $\infty$.
		\item[$\mathbb{C}_\infty$] :=  The completion of a fixed algebraic closure of $k_\infty$.
		\item[$\overline{k}$] := A fixed algebraic closure of $k$ inside $\mathbb{C}_\infty$.
		\item[$M_L$] := The set of all places of $L$ whenever $L/k$ is a finite extension of fields.
	\end{itemize}
	
	Recall that for each $v\in M_L$ we fix a uniformizer $\varpi_v$ of $v$ in $L$ and an additive valuation $\mathrm{ord}_v(\cdot)$ such that $\mathrm{ord}_v(\varpi_v)=1$. We extend $\mathrm{ord}_v(\cdot)$ to $\Mat_{m\times n}(L)$ by setting $\mathrm{ord}_v(B):=\min_{i,j}\{\mathrm{ord}_v(B_{ij})\}$ for each $B=(B_{ij})\in\Mat_{m\times n}(L)$. Recall that we fix the additive valuation $\deg_t(\cdot)$ on $\mathbb{F}_q[t]$ such that $\deg_t(t)=1$ and then extend $\deg_t(\cdot)$ to the rational function field $\mathbb{F}_q(t)$ by setting $\deg_t(P/Q):=\deg_t(P)-\deg_t(Q)$ with $P,Q\in\mathbb{F}_q[t]$ and $Q\neq 0$. We further extend $\deg_t(\cdot)$ to $\Mat_{m\times n}(\mathbb{F}_q(t))$ by setting $\deg_t(B):=\max_{i,j}\{\deg_t(B_{ij})\}$ for each $B=(B_{ij})\in\Mat_{m\times n}(\mathbb{F}_q(t))$.

    
    
\subsection{Siegel's lemma for function fields and lattice over $\mathbb{F}_q[t]$}
    In this section, we recall some tools from Diophantine geometry. We first briefly review a function field analogue of Bombieri-Vaaler's theorem \cite{BV83}, which was established by Thunder in \cite{Thu95}.  We start by introducing the notion of height.
    
    \begin{definition}\label{Def:Height}
        Let $n\in\mathbb{Z}_{>0}$. For each $\mathbf{x}=(x_1,\dots,x_n)\in L^n$ and $v\in M_L$, we set 
        $$\mathrm{ord}_v(\mathbf{x}):=\min_{1\leq i\leq n}\mathrm{ord}_v(x_i).$$
        Then, for each $0\neq\mathbf{x}\in L^n$, we obtain a divisor of $L$
        $$\mathrm{div}(\mathbf{x}):=\sum_{v\in M_L}\mathrm{ord}_v(\mathbf{x})\cdot v.$$
        The height of $\mathbf{x}$ is defined to be
        $$h(\mathbf{x}):=\frac{-1}{[L:k]}\deg(\mathrm{div}(\mathbf{x}))=\frac{-1}{[L:k]}\sum_{v\in M_L}\left(\min_{1\leq i\leq n}\mathrm{ord}_v(x_i)\cdot\deg(v)\right).$$
    \end{definition}
    
    Note that this height is projective in the sense that $h(c\cdot\mathbf{x})=h(\mathbf{x})$ for each $c\in L^\times$. We extend this height to matrices $B=(B_{ij})\in\Mat_{m\times n}(L)$ with $0<\rank(B)=m<n$ via the Grassmann coordinates. To be more precise, if $J\subset\{1,\dots,n\}$ is a subset with cardinality $|J|=m$, then we set $B_J:=(B_{ij})\in\Mat_{m\times m}(L)$ with $1\leq i\leq m$ and $j\in J$. The height of $B$ is defined by
    $$h(B):=\frac{-1}{[L:k]}\sum_{v\in M_L}\left(\min_J\mathrm{ord}_v(\det(B_J))\right)$$
    where $J$ runs over all $J\subset\{1,\dots,n\}$ with $|J|=m$.
    The following theorem is a special case of \cite[Cor.~2]{Thu95}.
    
    \begin{theorem}\label{Thm:Function_Fields_BV}
        Let $B\in\Mat_{m\times n}(k)$ with $0<\rank(B)=m<n$. Then, there are $k$-linearly independent vectors $\mathbf{x}_1\dots,\mathbf{x}_{n-m}\in \Mat_{n\times 1}(k)$ with $B\cdot\mathbf{x}_i=0$ satisfying that $$\sum_{i=1}^{n-m}h(\mathbf{x}_i)<h(B).$$
    \end{theorem}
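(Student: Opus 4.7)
The plan is to follow the Bombieri--Vaaler strategy adapted to function fields, with the geometry of numbers replaced by Riemann--Roch on $\mathbb{P}^1_{\mathbb{F}_q}$; the statement is quoted by the author as a special case of Thunder's \cite[Cor.~2]{Thu95}. I would first reformulate everything in terms of the kernel subspace $V := \ker(B) \subset k^n$, an $(n-m)$-dimensional $k$-vector space. Define the height $h(V)$ as the projective height of the Plücker coordinates of $V$ inside $\mathbb{P}\bigl(\bigwedge^{n-m} k^n\bigr)$. A Cauchy--Binet calculation identifying the $m \times m$ minors of $B$ (up to sign) with the Plücker coordinates of $\ker(B)$ gives $h(V) = h(B)$, so it suffices to construct $k$-linearly independent $\mathbf{x}_1, \ldots, \mathbf{x}_{n-m} \in V$ with $\sum_i h(\mathbf{x}_i) < h(V)$.

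Next I would introduce the successive minima $\lambda_1 \leq \cdots \leq \lambda_{n-m}$ of $V$, where $\lambda_i$ is the smallest integer $N$ for which $V$ contains $i$ $k$-linearly independent vectors of height at most $N$. Any choice of $\mathbf{x}_1, \ldots, \mathbf{x}_{n-m}$ realizing these minima is automatically $k$-linearly independent with $\sum_i h(\mathbf{x}_i) = \sum_i \lambda_i$, so the problem reduces to the Minkowski-type inequality $\sum_i \lambda_i < h(V)$. The central tool is a counting of the $\mathbb{F}_q$-dimension of
$$V(D) := \{\mathbf{x} \in V : \mathrm{div}(\mathbf{x}) + D \geq 0\}$$
for effective divisors $D$ on $\mathbb{P}^1_{\mathbb{F}_q}$. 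Riemann--Roch on a genus-zero curve gives $\ell(D) = \deg(D) + 1$, and a careful adelic computation yields the exact identity $\dim_{\mathbb{F}_q} V(D) = (n-m)(\deg(D)+1) - h(V)$ once $\deg(D)$ is sufficiently large. The successive-minima structure of $V$ on the other hand produces, via Riemann--Roch scalar multiples of vectors realizing each $\lambda_i$, an asymptotic count of $\sum_i(\deg(D) - \lambda_i + 1)$ $\mathbb{F}_q$-linearly independent elements of $V(D)$. Comparing these two expressions as $\deg(D) \to \infty$ yields the desired bound on $\sum_i \lambda_i$.

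The main obstacle is the identity $\dim_{\mathbb{F}_q} V(D) = (n-m)(\deg(D)+1) - h(V)$, together with the adelic reconciliation of the Plücker definition of $h(V)$ with the covolume of the $A$-lattice $V \cap A^n$, appropriately twisted at the place $\infty$. For base curves of positive genus this is precisely where a genus correction would enter in Minkowski's second theorem; over $k = \mathbb{F}_q(\theta)$ the genus vanishes and Riemann--Roch delivers the inequality with the extra $+1$ that is responsible for the strict form of the final bound. Translating the successive-minima data back into an explicit $k$-basis $\mathbf{x}_1, \ldots, \mathbf{x}_{n-m}$ of $V$ of small total height is then routine.
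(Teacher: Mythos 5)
First, a point of orientation: the paper does not prove this statement at all --- it is imported verbatim as ``a special case of \cite[Cor.~2]{Thu95}'', so there is no in-paper argument to match. Judged on its own terms, your sketch has the right architecture (it is essentially the framework of Thunder's proof): the reduction to $V=\ker(B)$ via the duality $h(V)=h(B)$ of complementary Grassmann coordinates is correct, the successive minima are the right invariants, and the Riemann--Roch count $\dim_{\mathbb{F}_q}V(D)=(n-m)(\deg(D)+1)-h(V)$ for $\deg(D)\gg 0$ is correct (it is $h^0$ of the twist by $\mathcal{O}(D)$ of the saturated subsheaf of $\mathcal{O}_{\mathbb{P}^1}^n$ with generic fibre $V$, which has degree $-h(V)$ and rank $n-m$).

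The gap is in the final comparison, and it is not cosmetic: it is exactly where the theorem lives. From vectors $\mathbf{x}_1,\dots,\mathbf{x}_{n-m}$ realizing the minima you only get the \emph{inclusion} $\bigoplus_i\mathcal{L}(D+\mathrm{div}(\mathbf{x}_i))\,\mathbf{x}_i\subset V(D)$, hence the \emph{lower} bound $\dim_{\mathbb{F}_q}V(D)\geq\sum_i(\deg(D)-\lambda_i+1)$. Combined with the exact Riemann--Roch value this yields $\sum_i\lambda_i\geq h(V)$ --- the trivial direction, and the opposite of what you need. To conclude $\sum_i\lambda_i\leq h(V)$ you must show that for suitable $D$ the displayed inclusion is an \emph{equality}, i.e.\ that $V(D)$ is exhausted by $\mathcal{L}$-multiples of finitely many vectors; this is the function-field Minkowski second theorem, equivalently (over $\mathbb{P}^1$) the Grothendieck splitting of the subsheaf $\mathcal{E}_V$ into line bundles $\bigoplus_i\mathcal{O}(-e_i)$ with $\sum_ie_i=h(V)$ together with the identification $\lambda_i=e_i$. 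That is the actual content of Thunder's theorem, and your sketch asserts rather than supplies it. A secondary caution: even the repaired argument produces $\sum_ih(\mathbf{x}_i)=h(V)=h(B)$, with equality genuinely attained (e.g.\ when $n-m=1$ the height is projective, so every nonzero kernel vector has height exactly $h(B)$), so your suggestion that the genus-zero ``$+1$'' is what makes the inequality strict does not hold up; the strictness must come from Thunder's precise normalization of $h(B)$, not from Riemann--Roch.
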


    For $B=(B_{ij})\in\Mat_{m\times n}(\mathbb{F}_q[t])$, we recall that $\deg_t(B)=\max_{i,j}\{\deg_t(B_{ij})\}$. We state an immediate consequence of Theorem~\ref{Thm:Function_Fields_BV} as follows.
    
    \begin{corollary}\label{Cor:Function_Fields_BV}
        Let $B\in\Mat_{m\times n}(\mathbb{F}_q[t])$ with $0<\rank(B)=m<n$. Then, there are $\mathbb{F}_q[t]$-linearly independent $\mathbf{x}_1,\dots,\mathbf{x}_{n-m}\in\Mat_{n\times 1}(\mathbb{F}_q[t])$ with $B\cdot\mathbf{x}_i=0$ satisfying that
        $$\deg_t(\mathbf{x}_i)<m\cdot\deg_t(B).$$
    \end{corollary}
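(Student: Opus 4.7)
The plan is to apply Theorem~\ref{Thm:Function_Fields_BV} with $\mathbb{F}_q(t)$ playing the role of the rational function field $k = \mathbb{F}_q(\theta)$; the $t$-degree bound on the kernel vectors will then fall out of translating the projective height bound into the language of polynomial degrees. Viewing $B$ as a matrix over $\mathbb{F}_q(t)$, Theorem~\ref{Thm:Function_Fields_BV} produces $\mathbb{F}_q(t)$-linearly independent vectors $\mathbf{x}_1,\dots,\mathbf{x}_{n-m} \in \mathbb{F}_q(t)^n$ with $B \cdot \mathbf{x}_i^\tr = 0$ and
$$\sum_{i=1}^{n-m} h(\mathbf{x}_i) < h(B).$$
Because the height is projective, each $\mathbf{x}_i$ may be rescaled by a nonzero element of $\mathbb{F}_q(t)$ without affecting $h(\mathbf{x}_i)$ or disturbing linear independence; after clearing denominators and dividing out the gcd of the entries, I may therefore assume every $\mathbf{x}_i$ lies in $\mathbb{F}_q[t]^n$ and is primitive.

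The heart of the argument is a dictionary between $h$ and $\deg_t$ on polynomial vectors. For a primitive $\mathbf{x} \in \mathbb{F}_q[t]^n$, the quantity $\min_j \mathrm{ord}_v(x_j)$ vanishes at every finite place of $\mathbb{F}_q(t)$ (those corresponding to monic irreducibles) by primitivity, so only the place at infinity contributes, giving $h(\mathbf{x}) = \max_j \deg_t(x_j)$. For $B$, each $m \times m$ minor $\det(B_J)$ lies in $\mathbb{F}_q[t]$, and by expanding the determinant as an alternating sum of products of $m$ entries its $t$-degree is at most $m \cdot \deg_t(B)$; the Grassmann definition of $h(B)$, applied place by place in the same fashion, then yields $h(B) \le m \cdot \deg_t(B)$.

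Chaining these estimates together,
$$\deg_t(\mathbf{x}_i) \le \sum_{j=1}^{n-m} h(\mathbf{x}_j) < h(B) \le m \cdot \deg_t(B),$$
which is exactly the asserted bound. I do not foresee a serious obstacle: the argument amounts to invoking Theorem~\ref{Thm:Function_Fields_BV} over the rational function field $\mathbb{F}_q(t)$, and the only point requiring real care is the valuation-theoretic bookkeeping that passes between the projective height $h$ and the polynomial degree $\deg_t$, in particular verifying the inequality $h(B) \le m \cdot \deg_t(B)$ from the definition of the height of a matrix via Grassmann coordinates.
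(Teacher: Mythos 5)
Your argument is correct and is precisely the derivation the paper leaves implicit when it calls the corollary an ``immediate consequence'' of Theorem~\ref{Thm:Function_Fields_BV}: rescale to primitive vectors in $\mathbb{F}_q[t]^n$ so that only the infinite place of $\mathbb{F}_q(t)$ contributes, identify $h(\mathbf{x})$ with the maximal entry degree, and bound the Grassmann height of $B$ by $m\cdot\deg_t(B)$ via cofactor expansion of the minors. The only point worth flagging is that you are (rightly) reading $\deg_t$ of a vector or matrix as the \emph{maximum} of the entry degrees --- which is what both the statement of the corollary and its use in Section~3 require --- even though the paper's Section~1.2 literally defines it as a minimum.
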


    Inspired by the proof of Masser in \cite{Mas88}, we introduce a basic property for free $\mathbb{F}_q[t]$-modules of finite rank. More precisely, the following lemma is inspired by \cite{Cas59} and it is crucial in the proof of Theorem~\ref{Thm:Main_Thm}.

    \begin{lemma}[cf.~{\cite[Lem.~8~(p.135)]{Cas59}}]\label{Lem:Lattice_2}
        Let $\nu\in\mathbb{Z}_{>0}$ and $\Lambda_0\subset\Lambda\subset\mathbb{F}_q[t]^\nu$ be two free $\mathbb{F}_q[t]$-modules of finite rank with $\rank_{\mathbb{F}_q[t]}\Lambda_0=\rank_{\mathbb{F}_q[t]}\Lambda=\nu$. Suppose that $\{\beta_1,\dots,\beta_\nu\}$ forms an $\mathbb{F}_q[t]$-basis of $\Lambda_0$. Then $\Lambda$ admits an $\mathbb{F}_q[t]$-basis $\{\gamma_1,\dots,\gamma_\nu\}$ such that for any $1\leq j\leq \nu$
        $$\deg_t(\gamma_j)\leq\max_{i\leq j}\{\deg_t(\beta_i)\}.$$
    \end{lemma}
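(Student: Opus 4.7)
The plan is to induct on $\nu$, invoking Lemma~\ref{Lem:Lattice_1} together with the division algorithm over $\mathbb{F}_q[t]$. Without loss of generality I reorder the basis so that $\deg_t(\beta_1) \leq \cdots \leq \deg_t(\beta_\nu)$; under this ordering it suffices to establish the stronger bound $\deg_t(\gamma_j) \leq \deg_t(\beta_j)$ for each $j$, which implies the stated inequality. For $1 \leq j \leq \nu$ I set $V_j := \mathrm{span}_{\mathbb{F}_q(t)}\{\beta_1,\dots,\beta_j\}$ and $\Lambda_j := \Lambda \cap V_j$. Since $\beta_1,\dots,\beta_j \in \Lambda_j$ are $\mathbb{F}_q(t)$-linearly independent and $\Lambda_j$ is a finitely generated $\mathbb{F}_q[t]$-submodule of $V_j \cong \mathbb{F}_q(t)^j$, it is free of rank $j$; note also $\Lambda_\nu = \Lambda$. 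The induction goal is to construct an $\mathbb{F}_q[t]$-basis $\{\gamma_1,\dots,\gamma_j\}$ of $\Lambda_j$ with $\deg_t(\gamma_i) \leq \deg_t(\beta_i)$ for each $i \leq j$; the case $j = \nu$ gives the conclusion.

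The base case $j=1$ is immediate: $\Lambda_1 = \mathbb{F}_q[t]\gamma_1$ for some $\gamma_1$, and $\beta_1 = c\gamma_1$ with $c \in \mathbb{F}_q[t]\setminus\{0\}$ forces $\deg_t(\gamma_1) \leq \deg_t(\beta_1)$. For the inductive step, the quotient $\Lambda_j/\Lambda_{j-1}$ is torsion-free (because $\Lambda_{j-1} = \Lambda_j \cap V_{j-1}$) of rank one, hence free of rank one over the PID $\mathbb{F}_q[t]$; so the existing basis $\{\gamma_1,\dots,\gamma_{j-1}\}$ of $\Lambda_{j-1}$ extends to a basis $\{\gamma_1,\dots,\gamma_{j-1},\gamma_j'\}$ of $\Lambda_j$. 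Expanding
$$\beta_j = \sum_{i<j} c_i \gamma_i + c_j \gamma_j', \quad c_j\neq 0,$$
I apply the division algorithm to write $c_i = q_i c_j + r_i$ with $\deg_t(r_i) < \deg_t(c_j)$, and define $\gamma_j := \gamma_j' + \sum_{i<j} q_i \gamma_i$. This is a unipotent change of basis, so $\{\gamma_1,\dots,\gamma_j\}$ remains a basis of $\Lambda_j$, and substitution yields the key identity
$$c_j \gamma_j = \beta_j - \sum_{i<j} r_i \gamma_i.$$

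The degree bound now drops out of the ultrametric property of $\deg_t$. If $c_j \in \mathbb{F}_q^*$ then every $r_i = 0$ and $\gamma_j = c_j^{-1}\beta_j$, giving $\deg_t(\gamma_j) = \deg_t(\beta_j)$. Otherwise $\deg_t(c_j) \geq 1$, so $\deg_t(r_i) \leq \deg_t(c_j) - 1$, and combining with $\deg_t(\gamma_i) \leq \deg_t(\beta_i) \leq \deg_t(\beta_j)$ (from the inductive hypothesis and the ordering) yields $\deg_t(r_i\gamma_i) \leq \deg_t(c_j) - 1 + \deg_t(\beta_j)$ for each $i<j$; the ultrametric inequality applied to the key identity then forces $\deg_t(c_j) + \deg_t(\gamma_j) \leq \deg_t(c_j) + \deg_t(\beta_j)$, whence $\deg_t(\gamma_j) \leq \deg_t(\beta_j)$. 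The main technical subtlety I anticipate is the bookkeeping for the basis-extension step from $\Lambda_{j-1}$ to $\Lambda_j$: while this ultimately rests on the PID structure of $\mathbb{F}_q[t]$, making it precise amounts to applying Lemma~\ref{Lem:Lattice_1} to a full-rank pair inside $\Lambda_j$ or, equivalently, to splitting the short exact sequence $0 \to \Lambda_{j-1} \to \Lambda_j \to \mathbb{F}_q[t] \to 0$.
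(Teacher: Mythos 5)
Your proof is correct and is essentially the argument the paper gives: both rest on Lemma~\ref{Lem:Lattice_1} (equivalently, the PID structure of $\mathbb{F}_q[t]$) to produce a basis of $\Lambda$ triangular with respect to $\{\beta_1,\dots,\beta_\nu\}$, followed by a unipotent correction that makes the lower-order coefficients small and a final ultrametric degree estimate. The only differences are in packaging --- you run an induction along the filtration $\Lambda\cap V_j$ and reduce the polynomial coefficients $c_i$ modulo $c_j$ by Euclidean division, where the paper corrects $\widetilde{\gamma}_j$ in one pass by subtracting the polynomial parts of the rational coefficients $c_{j,i}\in\mathbb{F}_q(t)$; and your reordering of the $\beta_i$ by degree is harmless, since the $j$-th smallest of the $\deg_t(\beta_i)$ is at most $\max_{i\le j}\deg_t(\beta_i)$, so the basis you produce also satisfies the bound for the original ordering.
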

    
    Note that most arguments are parallel to the proof of \cite[Thm.~I~(p.11)]{Cas59} and \cite[Lem.~8~(p.135)]{Cas59}. The major difference is that the ultrametric analysis here leads to a better estimation. We omit the proof here and we refer readers to \cite{Cas59} for the detailed arguments which essentially leads to Lemma~\ref{Lem:Lattice_2}.

\subsection{Anderson dual $t$-motives of Drinfeld $\mathbb{F}_q[t]$-modules}
    In this section, We recall the notion of Anderson dual $t$-motives. Here we will call dual $t$-motives by following \cite[Def.~4.4.1]{ABP04}. Note that the same object is called Anderson $t$-motives in \cite[Def.3.4.1]{Pap08}. Then we review the essential properties for dual $t$-motives associated to Drinfeld $\mathbb{F}_q[t]$-modules. For further information of these objects, one can consult \cite{BP20,HJ20,NP21}.

    For $n\in\mathbb{Z}$, we define the $n$-fold Frobenius twisting map
    \begin{align*}
	    \mathbb{C}_{\infty}((t))&\rightarrow\mathbb{C}_{\infty}((t))\\
	    f:=\sum_{i}a_it^i&\mapsto \sum_{i}a_i^{q^{n}}t^i=:f^{(n)}.
    \end{align*}
    We denote by $\overline{k}[t, \sigma]$ the non-commutative $\overline{k}[t]$-algebra generated by $\sigma$ subject to the following relation:
    \[
        \sigma f=f^{(-1)}\sigma, \quad f\in\overline{k}[t].
    \]
    Note that $\overline{k}[t,\sigma]$ contains $\overline{k}[t]$, $\overline{k}[\sigma]$, and its center is $\mathbb{F}_q[t]$. The definition of dual $t$-motives is given as follows.
    
    \begin{definition}
        A dual $t$-motive is a left  $\overline{k}[t, \sigma]$-module $\mathcal{M}$ satisfying that
        \begin{itemize}
            \item[(i)] $\mathcal{M}$ is a free left $\overline{k}[t]$-module of finite rank.
            \item[(ii)] $\mathcal{M}$ is a free left $\overline{k}[\sigma]$-module of finite rank.
            \item[(iii)] $(t-\theta)^n \mathcal{M}\subset \sigma \mathcal{M}$ for any sufficiently large integer $n$.
        \end{itemize}
    \end{definition}
    
    Now we recall the notion of \emph{$t$-frame}. Let $\mathcal{M}$ be a dual $t$-motive with $\overline{k}[t]$-basis $\{m_1,\dots,m_r\}$. Then there is an unique matrix $\Phi\in\Mat_r(\overline{k}[t])$ such that
    \[
        \sigma(m_1,\dots,m_r)^\tr=\Phi(m_1,\dots,m_r)^\tr.
    \]
    Now we define a map
    \begin{align*}
        \iota:\Mat_{1\times r}(\overline{k}[t])&\to\mathcal{M}\\
        (a_1,\dots,a_r)&\mapsto a_1m_1+\cdots+a_rm_r.
    \end{align*}
    We call the pair $(\iota,\Phi)$ a $t$-frame of $\mathcal{M}$. Note that for any $(a_1,\dots,a_r)\in\Mat_{1\times r}(\overline{k}[t])$, we have (cf.~\cite[Prop.~3.2.2]{NP21})
    \begin{align*}
        \sigma\iota(a_1,\dots,a_r)&=\sigma(a_1,\dots,a_r)(m_1,\dots,m_r)^\tr\\
        &=(a_1^{(-1)},\dots,a_r^{(-1)})\sigma(m_1,\dots,m_r)^\tr\\
        &=(a_1^{(-1)},\dots,a_r^{(-1)})\Phi(m_1,\dots,m_r)^\tr\\
        &=\iota((a_1^{(-1)},\dots,a_r^{(-1)})\Phi).
    \end{align*}
    
    Let $L/k$ be a finite extension and $E=(\mathbb{G}_{a/L},\phi)$ be a Drinfeld $\mathbb{F}_q[t]$-module of rank $r$ defined over $L$ with $\phi_t=\theta+\kappa_1\tau+\cdots+\kappa_r\tau^r\in L[\tau]$. In what follows, we explain how to associate a $\overline{k}[t,\sigma]$-module $\mathcal{M}_E$ for $E$.  Let $\mathcal{M}_E=\overline{k}[\sigma]$. It naturally has a left $\overline{k}[\sigma]$-module structure. The left $\overline{k}[t]$-module of $\mathcal{M}_E$ is given by the following setting: for each $m\in\mathcal{M}_E$, we define
    \[
        tm:=m\phi_t^\star:=m\left(\theta+\kappa_1^{(-1)}\sigma+\cdots+\kappa_r^{(-r)}\sigma^r\right).
    \]
    It is clear that $\mathcal{M}_E$ is also free of rank $1$ over $\overline{k}[\sigma]$ and it is a straightforward computation that
    \[
        (t-\theta)\mathcal{M}_E\subset\sigma\mathcal{M}_E.
    \]
    Note that $\mathcal{M}_E$ is free of rank $r$ over $\overline{k}[t]$, and $\{1,\sigma,\dots,\sigma^{r-1}\}$ forms a $\overline{k}[t]$-basis of $\mathcal{M}_E$. This $\ok[t]$-basis induces a $t$-frame $(\iota,\Phi_E)$ of $\mathcal{M}_E$, where $\Phi_E\in\Mat_{r}(\overline{k}[t])$ is given by
     \begin{equation}\label{Eq:Phi_E}
        \Phi_E=\begin{pmatrix}
            0 & 1 & & \\
            \vdots & & \ddots \\
            0 & & & 1\\
            \frac{t-\theta}{\kappa_r^{(-r)}} & \frac{-\kappa_1^{(-1)}}{\kappa_r^{(-r)}} & \cdots &
            \frac{-\kappa_{r-1}^{(-r+1)}}{\kappa_r^{(-r)}} 
        \end{pmatrix}\in\Mat_{r}(\overline{k}[t]).
    \end{equation}
    
    Now we are going to explain how to recover the $\mathbb{F}_q[t]$-module $E(\ok)$ from its associated dual $t$-motive $\mathcal{M}_E$. Let $m=\sum_{i=0}^{n}a_i\sigma^i\in\mathcal{M}_E$ with $a_i\in\ok$. Then we define
    \[
        \epsilon_0(m):=a_0\in\overline{k},~
        \epsilon_1(m):=\sum_{i=0}^na_i^{(i)}\in\overline{k}.
    \]
    Note that $\epsilon_0:\mathcal{M}_E\to\overline{k}$ is a $\overline{k}$-linear map and $\epsilon_1:\mathcal{M}_E\to\overline{k}$ is an $\mathbb{F}_q$-linear map. We have the following lemma due to Anderson.
    
    \begin{lemma}[Anderson,~{\cite[Prop.~2.5.8]{HJ20}},~{\cite[Lem.~3.1.2]{NP21}}]\label{Lem:DtMotives_to_tModules}
        For any $a\in\mathbb{F}_q[t]$, we have the following commutative diagrams with exact rows:
        \[
        \begin{tikzcd}
            0 \arrow[r] & \mathcal{M}_E \arrow[d, "a(\cdot)"] \arrow[r, "\sigma(\cdot)"] & \mathcal{M}_E \arrow[d, "a(\cdot)"] \arrow[r, "\epsilon_0"] & \overline{k} \arrow[d, "\partial\phi_a(\cdot)"] \arrow[r] & 0 \\
            0 \arrow[r] & \mathcal{M}_E \arrow[r, "\sigma(\cdot)"] & \mathcal{M}_E \arrow[r, "\epsilon_0"] & \overline{k} \arrow[r] & 0
        \end{tikzcd}
        \]
        and
        \[
        \begin{tikzcd}
            0 \arrow[r] & \mathcal{M}_E \arrow[d, "a(\cdot)"] \arrow[r, "(\sigma-1)(\cdot)"] & \mathcal{M}_E \arrow[d, "a(\cdot)"] \arrow[r, "\epsilon_1"] & \overline{k} \arrow[d, "\phi_a(\cdot)"] \arrow[r] & 0 \\
            0 \arrow[r] & \mathcal{M}_E \arrow[r, "(\sigma-1)(\cdot)"] & \mathcal{M}_E \arrow[r, "\epsilon_1"] & \overline{k} \arrow[r] & 0.
        \end{tikzcd}
        \]
        In particular, $\epsilon_0$ and $\epsilon_1$ induce isomorphisms:
        \[
            \epsilon_0:\mathcal{M}_E/\sigma\mathcal{M}_E\cong \Lie(E)(\overline{k}),~
            \epsilon_1:\mathcal{M}_E/(\sigma-1)\mathcal{M}_E\cong E(\overline{k})
        \]
        where $\epsilon_0$ is $\overline{k}[t]$-linear and $\epsilon_1$ is $\mathbb{F}_q[t]$-linear.
    \end{lemma}
   
    Let $P\in E(L)$ be an element in $L$. Then 
    \[
        \epsilon_1^{-1}(P)=\iota(P,0,\dots,0)+(\sigma-1)\mathcal{M}_E\in\mathcal{M}_E/(\sigma-1)\mathcal{M}_E
    \]
    where $\epsilon_1$ is the induced isomorphism of $\mathbb{F}_q[t]$-modules given in Lemma~\ref{Lem:DtMotives_to_tModules} between $\mathcal{M}_E/(\sigma-1)\mathcal{M}_E$ and $E(\overline{k})$. The crucial point of our strategy in the present paper is to identify algebraic points on Drinfeld modules to elements in the quotient of its associated dual $t$-motives via $\epsilon_1$. Then we study the corresponding question in the setting of dual $t$-motives.

\section{The main theorem}
%
%
    
\subsection{The Frobenius difference equation}
    The primary goal of this subsection is to establish Lemma~\ref{Lem:Reduction_1}, which shows the equivalence between the $\mathbb{F}_q[t]$-linear relations among algebraic points on Drinfeld $\mathbb{F}_q[t]$-modules and the solvability of a Frobenius difference equation. We mention that Ho in his master thesis \cite[Thm.~4]{Ho20} established a similar result for $A$-valued points on Drinfeld $\mathbb{F}_q[t]$-modules $E$ defined over $A$ by following the ideas of \cite{Cha16}. In the present paper, we adopt a simplified approach avoiding the usage of $\Ext^1$-modules (cf.~\cite{CPY19}). Moreover, our result includes algebraic points in $E(L)$ where $E$ is a Drinfeld $\mathbb{F}_q[t]$-modules defined over $L$ and $L/k$ is a finite extension.
    \begin{lemma}\label{Lem:Reduction_1}
        Let $L/k$ be a finite extension. Let $E=(\mathbb{G}_{a/L},\phi)$ be a Drinfeld $\mathbb{F}_q[t]$-module defined over $L$ with $\phi_t=\theta+\kappa_1\tau+\cdots+\kappa_r\tau^r\in L[\tau]$ and $\kappa_r\neq 0$. Let $P_1,\dots,P_\ell\in E(L)$ be distinct non-zero elements in $L$ and $a_1,\dots,a_\ell\in\mathbb{F}_q[t]$ not all zero. Then the following two assertions are equivalent.
        \begin{enumerate}
            \item $\phi_{a_1}(P_1)+\cdots+\phi_{a_\ell}(P_\ell)=0$.
            \item There exists $g\in L[t]$ such that
            $$\kappa_rg^{(r)}+\cdots+\kappa_1g^{(1)}-(t-\theta)g=a_1P_1+\cdots+a_\ell P_\ell.$$
        \end{enumerate}
        Furthermore, if $P_1,\dots,P_\ell$ are linearly independent over $\mathbb{F}_q$, then the polynomial $g\in L[t]$ in the statement (2) must be non-zero.
    \end{lemma}
    
    \begin{proof}
        Recall that the dual $t$-motive $\mathcal{M}_E=\ok[\sigma]$ associated with the Drinfeld $\mathbb{F}_q[t]$-module $E$ is a free $\overline{k}[t]$-module of rank $r$. Let $\{1,\sigma,\dots,\sigma^{r-1}\}$ be a $\overline{k}[t]$-basis of $\mathcal{M}_E$. Then, we have the associated $t$-frame $(\iota,\Phi_E)$, where $\Phi_E\in\Mat_{r}(\overline{k}[t])$ is given in \eqref{Eq:Phi_E}. Furthermore, we have the following five equivalences (cf. \cite{Cha16,KL16,Ho20}):
        \begin{enumerate}
            \item $\phi_{a_1}(P_1)+\cdots+\phi_{a_\ell}(P_\ell)=0$ in $E(L)$.
            \item $\iota(a_1P_1+\cdots+a_\ell P_\ell,0,\dots,0)=0$ in $\mathcal{M}_E/(\sigma-1)\mathcal{M}_E$.
            \item There exist $\delta_1,\dots,\delta_r\in\overline{k}[t]$ such that
            \begin{equation}\label{Eq:Equi_3}
                (a_1P_1+\cdots+a_\ell P_\ell,0,\dots,0)=(-\delta_1^{(-1)},\dots,-\delta_r^{(-1)})\Phi_E-(-\delta_1,\dots,-\delta_r).
            \end{equation}
            \item There exist $\delta_1,\dots,\delta_r\in\overline{k}[t]$ such that
            \begin{equation}\label{Eq:Equi_4}
                \begin{cases}
                \delta_1&=\delta_r^{(-1)}\left(\frac{t-\theta}{\kappa_r^{(-r)}}\right)+a_1P_1+\cdots+a_\ell P_\ell\\
                \delta_2^{(1)}&=\delta_1+\delta_r\left(\frac{-\kappa_1}{\kappa_r^{(-r+1)}}\right)\\
                &\vdots\\ 
                \delta_r^{(r-1)}&=\delta_{r-1}^{(r-2)}+\delta_r^{(r-2)}\left(\frac{-\kappa_{r-1}}{\kappa_r^{(-1)}}\right)
                \end{cases}.
            \end{equation}
            \item There exists $g\in L[t]$ such that
            \begin{equation}\label{Eq:Equi_5}
                \kappa_rg^{(r)}+\cdots+\kappa_1g^{(1)}-(t-\theta)g=a_1P_1+\cdots+a_\ell P_\ell.
            \end{equation}
        \end{enumerate}
        
        The equivalence of the first and the second statement comes from Lemma~\ref{Lem:DtMotives_to_tModules} and our specific choice of the $\overline{k}[t]$-basis of $\mathcal{M}_E$. To see the equivalence of the second and the third statement, we note that the second statement is equivalent to the existence of $m\in\mathcal{M}_E$ such that $\iota(a_1P_1+\cdots+a_\ell P_\ell,0,\dots,0)=(\sigma-1)m$, and this is equivalent to the third statement since $\mathcal{M}_E$ is a free $\overline{k}[t]$-module. The equivalence of the third and the fourth statement comes from the direct computations and comparing each entry on both sides of \eqref{Eq:Equi_3}.        
        Now, it remains to explain the last two equivalences.
        
        \textbf{Claim 1: } $(4)\implies(5)$.
        \begin{proof}[{Proof of Claim 1}]\renewcommand{\qedsymbol}{}
            Suppose that there exist $\delta_1,\dots,\delta_r\in\overline{k}[t]$ such that (\ref{Eq:Equi_4}) holds. We set $g:=\frac{\delta_r^{(-1)}}{\kappa_r^{(-r)}}\in\overline{k}[t]$. Let $F:=a_1P_1+\cdots+a_\ell P_\ell$. Then, the first equality of (\ref{Eq:Equi_4}) is equivalent to $\delta_1=(t-\theta) g+F$. Since $g=\frac{\delta_r^{(-1)}}{\kappa_r^{(-r)}}$ and $\delta_1=(t-\theta) g+F$, the second equality of (\ref{Eq:Equi_4}) is equivalent to $\delta_2^{(1)}=(t-\theta) g+F-\kappa_1g^{(1)}$. We inductively continue this replacement, and finally, we conclude that the last equality of (\ref{Eq:Equi_4}) is equivalent to $\delta_r^{(r-1)}=(t-\theta) g+F-\kappa_1g^{(1)}-\cdots-\kappa_{r-1}g^{(r-1)}$, which is equivalent to $(t-\theta)g+F=\kappa_rg^{(r)}+\cdots+\kappa_1g^{(1)}$. Thus, it shows that $g=\frac{\delta_r^{(-1)}}{\kappa_r^{(-r)}}$ satisfies \eqref{Eq:Equi_5} as desired.
            To see that the coefficients of $g=\frac{\delta_r^{(-1)}}{\kappa_r^{(-r)}}$ are actually in $L$, we first compare $\deg_t(\cdot)$ on both sides of (\ref{Eq:Equi_5}). Since the right-hand side of (\ref{Eq:Equi_5}) has a degree less than or equal to $\deg_t(g)$, we may assume that $\deg_t(g)=n$ and $\deg_t(F)=n+1$. Now we express
            $$g=g_0+g_1t+\cdots+g_nt^n,~g_i\in\overline{k}$$
            and
            $$F=F_0+F_1t+\cdots+F_{n+1}t^{n+1},~F_i\in L.$$
            Comparing the coefficient of $t^i$ on both sides of (\ref{Eq:Equi_5}), we obtain
            $$g_n+F_{n+1}=0$$
            and
            $$g_{i-1}-\theta g_i+F_i=\kappa_r g_i^{q^r}+\cdots+\kappa_1 g_i^{q}$$
            for each $1\leq i\leq n$. Since $F_i\in L$ for each $0\leq i\leq n+1$, we inductively conclude that $g_i\in L$ for each $0\leq i\leq n$. Hence, $g\in L[t]$ as desired.
        \end{proof}
        
        \textbf{Claim 2: } $(5)\implies(4)$.
        \begin{proof}[{Proof of Claim 2}]\renewcommand{\qedsymbol}{}
            Suppose that there exists $g\in L[t]$ such that (\ref{Eq:Equi_5}) holds. We set $\delta_r:=\kappa_r^{(-r+1)} g^{(1)}\in\overline{k}[t]$. It is not hard to see that once $\delta_r$ is fixed, $\delta_1,\dots,\delta_{r-1}$ are automatically defined by the first $r-1$ equations of (\ref{Eq:Equi_4}). Thus, it remains to check the compatibility of the last equality of (\ref{Eq:Equi_4}). But it can be easily deduced from the first $r-1$ equations of (\ref{Eq:Equi_4}) and (\ref{Eq:Equi_5}). The desired result now follows immediately.
        \end{proof}
    \end{proof}
    
    \begin{remark}
        Let
        \[
            \mathbb{T}:=\{f=\sum_{i=0}^\infty f_it^i\in\mathbb{C}_\infty\llbracket t\rrbracket\mid \lim_{i\to\infty}|f_i|_\infty=0\}.
        \]
        If we set $a_1P_1+\cdots+a_\ell P_\ell=0$ in the second equivalent statement of Lemma~\ref{Lem:Reduction_1}, then by \cite[Sec.~4.2]{Pel08} (see also \cite[Prop.~6.2]{EGP14}) the homogeneous Frobenius difference equation 
        \[
            \kappa_rg^{(r)}+\cdots+\kappa_1g^{(1)}-(t-\theta)g=0
        \]
        admits $r$ linearly independent solutions in $\mathbb{T}$ over $\mathbb{F}_q(t)$ which are given by the Anderson generating functions of the Drinfeld module $E$. Lemma~\ref{Lem:Reduction_1} says that for $\mathbb{F}_q$-linearly independent $P_1,\dots,P_\ell\in E(L)$, the existence of non-trivial $\mathbb{F}_q[t]$-linear relations $\phi_{a_1}(P_1)+\cdots+\phi_{a_\ell}(P_\ell)=0$ is equivalent to the existence of the non-zero solutions $g\in L[t]$ of the non-homogeneous Frobenius difference equation
        \[
            \kappa_rg^{(r)}+\cdots+\kappa_1g^{(1)}-(t-\theta)g=a_1P_1+\cdots+a_\ell P_\ell.
        \]
    \end{remark}
    
\subsection{Linear relations among algebraic points on Drinfeld modules}
    In what follows, we aim to study linear relations among algebraic points on Drinfeld modules based on Lemma~\ref{Lem:Reduction_1}. Let $L/k$ be a finite extension. If $H(t):=H_0+H_1t+\cdots+H_mt^m\in L[t]$ with $H_i\in L$, then we set $\mathrm{ord}_v(H):=\min_{1\leq i\leq m}\{\mathrm{ord}_v(H_i)\}$ for each $v\in M_L$. Let $E=(\mathbb{G}_{a/L},\phi)$ be a Drinfeld $\mathbb{F}_q[t]$-module of rank $r$ defined over $L$ with $\phi_t=\theta+\kappa_1\tau+\cdots+\kappa_r\tau^r\in L[\tau]$. For each $v\in M_L$, we define (cf. \cite[Sec.~2]{Tag93})
    \[
        \mathcal{S}_{E,v}:=\max_{1\leq j\leq r}\{\frac{\mathrm{ord}_v(\kappa_r)-\mathrm{ord}_v(\kappa_j)}{q^r-q^j},\frac{\mathrm{ord}_v(\kappa_r)-\mathrm{ord}_v(t-\theta)}{q^r-1}\}.
    \]
    Furthermore, we define a divisor over $L$ associated to the Drinfeld $\mathbb{F}_q[t]$-module $E$
    \begin{equation}\label{Eq:slope_divisor}
        D_E:=\sum_{v\in M_L}\lceil\mathcal{S}_{E,v}\rceil\cdot v,
    \end{equation}
    where $\lceil X\rceil$ denote the smallest integer which exceeds $X$. 
    
    Recall that the associate polynomial $\mathcal{P}_E$ of $E$ defined in \eqref{Eq:Newton_Polygon_of_Drinfeld} is given by
    \[
        \mathcal{P}_E(x)=(t-\theta)x+\kappa_1x^q+\cdots+\kappa_rx^{q^r}\in L[t][x].
    \]
    Then the quantity $\mathcal{S}_{E,v}$ has the following interpretation: for each place $v\in M_L$, we define $\mathcal{N}_{E,v}$ to be the Newton polygon of $P_E(x)$ at the place $v$. More precisely, it is the Newton polygon consisting of the points $(1,\mathrm{ord}_v(t-\theta)),(q,\mathrm{ord}_v(\kappa_1)),\dots,(q^r,\mathrm{ord}_v(\kappa_r))$. Then one can derive by the direct calculation that $\mathcal{S}_{E,v}$ is the slope of the bottom rightmost edge on $\mathcal{N}_{E,v}$ passing through the point $(q^r,\mathrm{ord}_v(\kappa_r))$. 

    The next lemma asserts that the degree of $D_E$ is invariant under $L$-isomorphisms. In other words, $\deg(D_E)$ is well-defined on the $L$-isomorphism classes of $\mathbb{F}_q[t]$-modules.
    \begin{lemma}\label{Lem:Invariant}
        Let $u$ be an $L$-isomorphism of $E$. Then
        \[
            D_{u(E)}=D_E-\mathrm{div}(u).
        \]
        In particular, we have $\deg(D_{u(E)})=\deg(D_{E})$.
    \end{lemma}
    
    \begin{proof}
        We denote by $u(E)=(\mathbb{G}_{a/L},\phi')$ to be the Drinfeld $\mathbb{F}_q[t]$-module of rank $r$ defined over $L$ with $\phi'_t=\theta+\kappa'_1\tau+\cdots+\kappa'_r\tau^r\in L[\tau]$. Since $u\phi_t=\phi_t'u$, we must have $\kappa_j=u^{q^j-1}\kappa_j'$ for all $1\leq j\leq r$. Thus, for $1\leq j\leq r$, we have
        $$\frac{\mathrm{ord}_v(\kappa'_r)-\mathrm{ord}_v(\kappa'_j)}{q^r-q^j}=\frac{\mathrm{ord}_v(\kappa_r)-\mathrm{ord}_v(\kappa_j)}{q^r-q^j}-\mathrm{ord}_v(u)$$
        and
        $$\frac{\mathrm{ord}_v(\kappa'_r)-\mathrm{ord}_v(t-\theta)}{q^r-1}=\frac{\mathrm{ord}_v(\kappa_r)-\mathrm{ord}_v(t-\theta)}{q^r-1}-\mathrm{ord}_v(u).$$
        As $\mathrm{ord}_v(u)\in\mathbb{Z}$, we derive that for all $v\in M_L$
        $$D_{u(E)}=D_E-\mathrm{div}(u).$$
        The desired result now follows.
    \end{proof}
    
    We are ready to prove our first main result in this subsection.
    
    \begin{theorem}\label{Thm:Reduction_2}
        Let $L/k$ be a finite extension. Let $E=(\mathbb{G}_{a/L},\phi)$ be a Drinfeld $\mathbb{F}_q[t]$-module defined over $L$ with $\phi_t=\theta+\kappa_1\tau+\cdots+\kappa_r\tau^r\in L[\tau]$ and $\kappa_r\neq 0$. Let $P_1,\dots,P_\ell\in E(L)$ be distinct non-zero elements in $L$. Then there exists a matrix $B\in\Mat_{m\times n}(\mathbb{F}_q[t])$ with $\deg_t(B)\leq 1$ and $0< m:=\rank(B)<n:=d+\ell$  such that the canonical projection
        \begin{align*}
            \pi:\Gamma:=\{\mathbf{x}\in\mathbb{F}_q[t]^{(d+\ell)}\mid B\mathbf{x}^\tr=0\}&\twoheadrightarrow G=\{(a_1,\dots,a_\ell)\in\mathbb{F}_q[t]^\ell\mid \sum_{i=1}^\ell\phi_{a_i}(P_i)=0\}\\
            (g_1,\dots,g_d,a_1,\dots,a_\ell)&\mapsto (a_1,\dots,a_\ell)
        \end{align*}
        is a well-defined surjective $\mathbb{F}_q[t]$-module homomorphism.
    \end{theorem}
    
    \begin{proof}
        If $P_1,\dots,P_\ell$ are $\mathbb{F}_q[t]$-linearly independent, then the statement is clear. Thus, we may assume that $P_1,\dots,P_\ell$ are $\mathbb{F}_q[t]$-linearly dependent.
        For each $v\in M_L$ we set
        $$C_v:=\underset{1\leq j\leq r-1}{\underset{1\leq i\leq \ell}{\min}}
        \{\mathrm{ord}_v(P_i),\lfloor\frac{\mathrm{ord}_v(\kappa_j)-\mathrm{ord}_v(\kappa_r)}{q^r-q^j}\rfloor,\lfloor\frac{\mathrm{ord}_v(t-\theta)-\mathrm{ord}_v(\kappa_r)}{q^r-1}\rfloor\}.$$
        It is not hard to see that $C_v=0$ for all but finitely many $v\in M_L$.
        Let $a_1,\dots,a_\ell\in\mathbb{F}_q[t]$ not all zero. Then according to Lemma~\ref{Lem:Reduction_1}, $\sum_{i=1}^\ell\phi_{a_i}(P_i)=0$ is equivalent to 
        the existence of $g\in L[t]$ such that
        \begin{equation}\label{Eq:Reduc_1}
            \kappa_r g^{(r)}+\cdots+\kappa_1 g^{(1)}-(t-\theta)g=a_1P_1+\cdots+a_\ell P_\ell.
        \end{equation}
        Let $F:=a_1P_1+\cdots+a_\ell P_\ell$. Then we claim that $\mathrm{ord}_v(g)\geq C_v$ and $\mathrm{ord}_v(F)\geq C_v$. 
        
        To prove this claim, we first note that $\mathrm{ord}_v(F)\geq C_v$ is clear since
        $$\mathrm{ord}_v(F)=\mathrm{ord}_v(\sum a_iP_i)\geq \min\{\mathrm{ord}_v(P_i)\}\geq C_v.$$
        To prove $\mathrm{ord}_v(g)\geq C_v$, we first note that $\mathrm{ord}_v(t-\theta)=\min\{0,\mathrm{ord}_v(\theta)\}\leq 0$. Now suppose on the contrary that $\mathrm{ord}_v(g)<C_v$. Then we have
        $$\mathrm{ord}_v(g)<C_v\leq \mathrm{ord}_v(F)-\mathrm{ord}_v(t-\theta).$$
        Here we use the first part of the claim and the fact that $\mathrm{ord}_v(t-\theta)\leq 0$. In particular, we obtain
        $$\mathrm{ord}_v((t-\theta) g)=\mathrm{ord}_v(t-\theta)+\mathrm{ord}_v(g)<\mathrm{ord}_v(F).$$
        
        On the other hand, we also have
        $$\mathrm{ord}_v(g)<\frac{\mathrm{ord}_v(\kappa_j)-\mathrm{ord}_v(\kappa_r)}{q^r-q^j},~\mbox{for each}~1\leq j\leq r-1$$
        and hence we have
        $$\mathrm{ord}_v(\kappa_r g^{(r)})=\mathrm{ord}_v(\kappa_r)+q^r\mathrm{ord}_v(g)<\mathrm{ord}_v(\kappa_j)+q^j\mathrm{ord}_v(g)=\mathrm{ord}_v(\kappa_j g^{(j)}).$$
        It follows by comparing $\mathrm{ord}_v(\cdot)$ of both sides of (\ref{Eq:Reduc_1}) that
        $$\mathrm{ord}_v(t-\theta)+\mathrm{ord}_v(g)=\mathrm{ord}_v(\kappa_r)+q^r\mathrm{ord}_v(g).$$
        Thus, we obtain 
        $$\mathrm{ord}_v(g)=\frac{\mathrm{ord}_v(t-\theta)-\mathrm{ord}_v(\kappa_r)}{q^r-1}$$
        which contradicts to our assumption that $\mathrm{ord}_v(g)<C_v$. 
        
        Now we define 
        $$D:=\sum_{v\in M_L}\left(-C_v\right)\cdot v.$$
        Then $D$ defines a divisor of $L$ and we claim that
        \begin{equation}\label{Eq:divisor_identity}
            D=\left(-\mathrm{div}(P_1,\dots,P_\ell)\right)\vee D_E
        \end{equation}
        where $\mathrm{div}(P_1,\dots,P_\ell)$ and $D_E$ are divisors defined in Definition~\ref{Def:Height} and (\ref{Eq:slope_divisor}) respectively. The operator $\vee$ is defined in (\ref{Eq:max_divisors}). To verify (\ref{Eq:divisor_identity}), we recall that
        \[
            \mathrm{div}(P_1,\dots,P_\ell)=\sum_{v\in M_L}\left(\min_{1\leq i\leq\ell}\mathrm{ord}_v(P_i)\right)\cdot v
        \]
        and
        \[
            D_E=\sum_{v\in M_L}\max_{1\leq j\leq r}\{\lceil\frac{\mathrm{ord}_v(\kappa_r)-\mathrm{ord}_v(\kappa_j)}{q^r-q^j}\rceil,\lceil\frac{\mathrm{ord}_v(\kappa_r)-\mathrm{ord}_v(t-\theta)}{q^r-1}\rceil\}\cdot v.
        \]
        Thus, one derives that
        \begin{align*}
            \left(-\mathrm{div}(P_1,\dots,P_\ell)\right)\vee D_E&=\sum_{v\in M_L}\max\{-\left(\min_{1\leq i\leq\ell}\mathrm{ord}_v(P_i)\right),\lceil\mathcal{S}_{E,v}\rceil\}\cdot v\\
            &=\sum_{v\in M_L}-\min\{\left(\min_{1\leq i\leq\ell}\mathrm{ord}_v(P_i)\right),\lfloor-\mathcal{S}_{E,v}\rfloor\}\cdot v\\
            &=\sum_{v\in M_L}\left(-C_v\right)\cdot v\\
            &=D
        \end{align*}
        which proves the desireed claim.
        
        We have shown that $g,F\in\mathcal{L}(D)\otimes_{\mathbb{F}_q}\mathbb{F}_q[t]$ in the sense that all coefficients of $g$ and $F$ lie in $\mathcal{L}(D)$ when we regard $g$ and $F$ as polynomials in $L[t]$. 
        Let $\{\beta_1,\dots,\beta_d\}$ be an $\mathbb{F}_q$-basis of $\mathcal{L}(D)$ where $d:=\dim_{\mathbb{F}_q}\mathcal{L}(D)$. We may express 
        $$g=g_1\beta_1+\cdots+g_d\beta_d,~\mbox{ for some }g_1\dots,g_d\in\mathbb{F}_q[t]$$
        and
        $$F=\left(\sum_{i=1}^\ell a_iP_{i1}\right)\beta_1+\cdots+\left(\sum_{i=1}^\ell a_iP_{id}\right)\beta_d,~\mbox{ for some }P_{ij}\in\mathbb{F}_q.$$
        It follows from (\ref{Eq:Reduc_1}) that
        \begin{equation*}
            \kappa_r\Big(g_1\beta_1^{q^r}+\cdots+g_d\beta_d^{q^r}\Big)+\cdots+\kappa_1\Big(g_1\beta_1^{q}+\cdots+g_d\beta_d^{q}\Big)-(t-\theta)\Big(g_1\beta_1+\cdots+g_d\beta_d\Big)\\
        \end{equation*}
        \begin{equation}\label{Eq:Linearized_1}
            =\left(\sum_{i=1}^\ell a_iP_{i1}\right)\beta_1+\cdots+\left(\sum_{i=1}^\ell a_iP_{id}\right)\beta_d .
        \end{equation}
        
        Now we set
        $$V:=\mathrm{Span}_{\mathbb{F}_q}\{\beta_i,~\theta\beta_i,~\kappa_j\beta_i^{q^j}\}_{1\leq i\leq d,~1\leq j\leq r}.$$ Let $\{\gamma_1,\dots,\gamma_{\Tilde{d}}\}\subset\{\beta_i,~\theta\beta_i,~\kappa_j\beta_i^{q^j}\}_{1\leq i\leq d,~1\leq j\leq r}$ be an $\mathbb{F}_q$-basis of $V$ where $\Tilde{d}:=\dim_{\mathbb{F}_q}V$.
        Then (\ref{Eq:Linearized_1}) becomes
        \begin{equation}\label{Eq:Linearized_2}
            H_1(g_1,\dots,g_d,a_1,\dots,a_\ell)\cdot\gamma_1+\cdots+H_{\Tilde{d}}(g_1,\dots,g_d,a_1,\dots,a_\ell)\cdot\gamma_{\Tilde{d}}=0
        \end{equation}
        where $H_i(X_1,\dots,X_{d+\ell})\in\mathbb{F}_q[t][X_1,\dots,X_{d+\ell}]$ is a homogeneous linear form in variables $X_1,\dots,X_{d+\ell}$ and $\deg_t(H_i)\leq 1$ for all $1\leq i\leq \Tilde{d}$. Since $\gamma_1,\dots,\gamma_{\Tilde{d}}$ are $\mathbb{F}_q$-linearly independent and one checks directly that $\mathbb{F}_q[t]$ and $L$ are linearly disjoint over $\mathbb{F}_q$, we obtain that $\gamma_1,\dots,\gamma_{\Tilde{d}}$ are $\mathbb{F}_q[t]$-linearly independent. Thus, $H_i(g_1,\dots,g_d,a_1,\dots,a_\ell)=0$ provides an $\mathbb{F}_q[t]$-linear system $B(g_1,\dots,g_d,a_1,\dots,a_\ell)^\tr=0$ for some $B\in\Mat_{m\times n}(\mathbb{F}_q[t])$ with $\deg_t(B)\leq 1$ and $0<m=\rank(B)<n=d+\ell$. Note that every solution $\mathbf{x}$ of $B\mathbf{x}^\tr=0$ gives a solution of (\ref{Eq:Reduc_1}) and vice versa. Thus, the desired result now follows immediately from Lemma~\ref{Lem:Reduction_1}.
    \end{proof}
    
    \begin{example}\label{Ex:Carlitz}
        Let $q>2$ and $C=(\mathbb{G}_{a/k},[\cdot])$ be the Carlitz module which is a Drinfeld module of rank $1$ defined over $k$ with the $\mathbb{F}_q$-linear algebra homomorphism
        \begin{align*}
            [\cdot]:\mathbb{F}_q[t]&\to k[\tau]\\
            t&\mapsto [t]:=\theta+\tau.
        \end{align*}
        Let $P_1=1,~P_2=\theta+1\in C(k)$. In what follows we would like to explain the $\mathbb{F}_q[t]$-linear relation
        \[
            [-t]P_1+[1]P_2=0
        \]
        by following the proof of Theorem~\ref{Thm:Reduction_2}. Let $a_1,a_2\in\mathbb{F}_q[t]$ not all zero. Then by Lemma~\ref{Lem:Reduction_1} we know that $[a_1]P_1+[a_2]P_2=0$ is equivalent to the existence of $g\in k[t]$ such that
        \[
            g^{(1)}-(t-\theta)g=a_1P_2+a_2P_2.
        \]
        By the same argument in the proof of Theorem~\ref{Thm:Reduction_2}, we know that the coefficients of $g$ as a polynomial in $k[t]$ must lie in the $\mathbb{F}_q$-vector space $\mathcal{L}(1\cdot(\infty))=\mathbb{F}_q+\mathbb{F}_q\cdot\theta$. Thus, the existence of $g\in k[t]$ is now equivalent to the existence of $g_0,~g_1\in\mathbb{F}_q[t]$ such that
        \[
            (g_0+g_1\theta^q)-(t-\theta)(g_0+g_1\theta)=(a_1+a_2)+a_2\theta.
        \]
        Since $q>2$, comparing with the $\mathbb{F}_q[t]$-coefficient of $1,~\theta$ and $\theta^q$ shows that
        \[
            \begin{cases}
                g_1=0\\
                (1-t)g_0=a_1+a_2\\
                a_2=g_0
            \end{cases}.
        \]
        Thus, the matrix $B$ in the statement of Theorem~\ref{Thm:Reduction_2} is described by
        \[
            B=\begin{pmatrix}
                0 & 1 & 0 & 0 \\
                1-t & 0 & 1 & 1 \\
                1 & 0 & 0 & 1
            \end{pmatrix}
        \]
        and
        \[
            \Gamma=\{\mathbf{x}=(g_0,g_1,a_1,a_2)\in\mathbb{F}_q[t]^4\mid B\mathbf{x}^\tr=0\}=\mathrm{Span}_{\mathbb{F}_q[t]}\{(1,0,-t,1)\}.
        \]
        In particular, the $\mathbb{F}_q[t]$-linear relations among $P_1$ and $P_2$ are generated by
        \[
            G=\{(a_1,a_2)\in\mathbb{F}_q[t]^2\mid[a_1]P_1+[a_2]P_2=0\}=\mathrm{Span}_{\mathbb{F}_q[t]}\{\pi(1,0,-t,1)=(-t,1)\}.
        \]
        Note that we can derive the same conclusion without the condition $q>2$. This condition is only used for simplifying the calculation.
    \end{example}
    
    \begin{proof}[Proof of Theorem~\ref{Thm:Main_Thm}]
        Let notation be the same as in Theorem~\ref{Thm:Reduction_2}.
        By Corollary~\ref{Cor:Function_Fields_BV}, there exist $\mathbb{F}_q[t]$-linearly independent vectors $\mathbf{x}_1,\dots,\mathbf{x}_{n-m}$ with entries in $\mathbb{F}_q[t]$ such that $\deg_t(\mathbf{x}_i)\leq\rank(B)\cdot\deg_t(B)< d+\ell$ and $B\cdot\mathbf{x}_i^\tr=0$ for each $1\leq i\leq n-m$. Let $\nu:=\rank_{\mathbb{F}_q[t]}G$. Since $G$ is a free $\mathbb{F}_q[t]$-module of rank $\nu$ and $\pi$ is surjective, there exists an $\mathbb{F}_q[t]$-linearly independent set 
        $$\{\mathbf{m}_1',\dots,\mathbf{m}_\nu'\}\subset\{\pi(\mathbf{x}_1),\dots,\pi(\mathbf{x}_{n-m})\}$$
        such that $G_0:=\mathrm{Span}_{\mathbb{F}_q[t]}\{\mathbf{m}_1',\dots,\mathbf{m}_\nu'\}\subset G$ is of finite index. In other words, we have $\rank_{\mathbb{F}_q[t]}G_0=\rank_{\mathbb{F}_q[t]}G=\nu$. Now we apply Lemma~\ref{Lem:Lattice_2} and we get an $\mathbb{F}_q[t]$-basis $\{\mathbf{m}_1,\dots,\mathbf{m}_\nu\}$ of $G$ such that $\deg_t(\mathbf{m}_i)\leq\max_{i=1}^\nu\{\deg_t(\mathbf{m}_i')\} <d+\ell$. 
    
        It remains to prove that $\deg(D)$ is invariant under $L$-isomorphisms of Drinfeld $\mathbb{F}_q[t]$-modules.
        Let $u$ be an $L$-isomorphism of $E$. We denote by $u(E)=(\mathbb{G}_{a/L},\phi')$ to be the Drinfeld $\mathbb{F}_q[t]$-module of rank $r$ defined over $L$ with $\phi'_t=\theta+\kappa'_1\tau+\cdots+\kappa'_r\tau^r\in L[\tau]$. If we set
        $$u(D)=\left(-\mathrm{div}(u(P_1),\dots,u(P_\ell))\right)\vee D_{u(E)},$$
        then we aim to show that $\deg(D)=\deg(u(D))$.
        
        To prove this equality, we first note that $\mathrm{ord}_v(u(P_i))=\mathrm{ord}_v(u)+\mathrm{ord}_v(P_i)$ for all $1\leq i\leq \ell$. Thus, we have 
        $$\mathrm{div}(u(P_1),\dots,u(P_\ell))=\mathrm{div}(P_1,\dots,P_\ell)+\mathrm{div}(u).$$
        Also, according to Lemma~\ref{Lem:Invariant}, we have
        \[
            D_{u(E)}=D_E-\mathrm{div}(u).
        \]
        Consequently, we get
        \begin{align*}
            u(D)&=\left(-\mathrm{div}(u(P_1),\dots,u(P_\ell))\right)\vee D_{u(E)}\\
            &=(\left(-\mathrm{div}(P_1,\dots,P_\ell)-\mathrm{div}(u)\right)\vee(D_E-\mathrm{div}(u))\\
            &=(\left(-\mathrm{div}(P_1,\dots,P_\ell)\right)\vee D_E)-\mathrm{div}(u)\\
            &=D-\mathrm{div}(u).
        \end{align*}
        Therefore, $\deg(D)=\deg(u(D))$ as the degree of the principal divisor is zero. Thus we complete the proof.
    \end{proof}
    
    \begin{remark}\label{Rmk:Optimality}
        Let $C=(\mathbb{G}_{a/k},[\cdot])$ be the Carlitz module given in Example~\ref{Ex:Carlitz}. Let $P_1=1$ and $P_n=[t^{n-1}]P_1$ for each $n\geq 2$. Then it is not hard to see that for each $n\geq 2$
        \[
            G_n=\{(a_1,a_n)\in\mathbb{F}_q[t]^2\mid [a_1]P_1+[a_n]P_n=0\}=\mathrm{Span}_{\mathbb{F}_q[t]}\{(-t^{n-1},1)\}
        \]
        and $\deg_t(-t^{n-1},1)=n-1$. 
        
        Now we fix $n\geq 2$. Let $a_1,a_n\in\mathbb{F}_q[t]$ not all zero. According to the proof of Theorem~\ref{Thm:Reduction_2}, $[a_1]P_1+[a_n]P_n=0$ is equivalent to existence of $g\in k[t]$ such that
        \begin{equation}\label{Eq:Difference_Equation_Carlitz}
            g^{(1)}-(t-\theta)g=a_1P_1+a_nP_n
        \end{equation}
        and the coefficients of $g$ as a polynomial in $k[t]$ lie in $\mathcal{L}(q^{n-2}\cdot(\infty))$ since $\mathrm{ord}_\infty(P_n)=-q^{n-2}$. Note that \eqref{Eq:Difference_Equation_Carlitz} induces a matrix $B_n$ with entries in $\mathbb{F}_q[t]$ and $\deg_t(B_n)\leq 1$, $\rank(B_n)<q^{n-2}+3$. In this case, the upper bound obtained in Theorem~\ref{Thm:Main_Thm} is $q^{n-2}+3$ which is far from the actual degree $n-1$.
        
        We mention that $q^{n-1}+3$ is just a naive upper bound of $\rank(B_n)$ and in general it is still unclear whether the upper bound obtained in Theorem~\ref{Thm:Main_Thm} can be reached or not. We hope to tackle this optimality problem in the near future.
    \end{remark}

\section{Applications and examples}
\subsection{A sufficient condition of linear independence}
    Theorem~\ref{Thm:Main_Thm} gives an effective way to determine the $\mathbb{F}_q[t]$-linear relations among algebraic points on Drinfeld modules. Nevertheless, one may not be able to construct immediately a collection of algebraic points which is $\mathbb{F}_q[t]$-linearly independent on a given Drinfeld module. In what follows, we adopt a different approach to analyze the Frobenius difference equation occurred in Lemma~\ref{Lem:Reduction_1} and we establish a linear independence result for a specific family of Drinfeld modules which enables us to show that the rank of $L$-valued points on this specific family of Drinfeld modules is countably infinite. This is just a special case of Poonen's theorem \cite[Thm.~1]{Poo95}, but our proof avoid the usage of the tameness of the $\mathbb{F}_q[t]$-module $E(L)$ (see \cite[Lem.~4]{Poo95}).
    
    To be more precise, let $L$ be a finite extension of $k$ with $k\subset L\subset \overline{k}$. Let $S\subset M_L$ be a finite collection of places in $L$ which includes all places lying above $\infty\in M_k$. Let 
    $$\mathcal{O}_S:=\{\alpha\in L\mid \mathrm{ord}_v(\alpha)\geq 0~\mbox{for all}~v\not\in S\}\subset L$$
    be the set of $S$-integers in $L$. Then our result is stated as follows.
    
    \begin{theorem}\label{Thm:Linearly_Independence}
        Let $E=(\mathbb{G}_{a/L},\phi)$ be a Drinfeld module $\mathbb{F}_q[t]$-module defined over $L$ with $\phi_t=\theta+\kappa_1\tau+\cdots+\kappa_r\tau^r\in\mathcal{O}_S[\tau]$ and $\kappa_r\in L^\times\cap\overline{\mathbb{F}}_q^\times$. Let $P_1,\dots,P_\ell\in E(L)$. Suppose that
        \begin{enumerate}
            \item $P_1,\dots,P_\ell$ are linearly independent over $\mathbb{F}_q$.
            \item $\mathrm{ord}_v(P_i)>0$ for all $1\leq i\leq\ell$, if $v\in S$.
            \item $\mathrm{ord}_v(P_i)\geq 1-q^r$ for all $1\leq i\leq\ell$, if $v\not\in S$.
        \end{enumerate}
        Then
        \[
            \rank_{\mathbb{F}_q[t]}\mathrm{Span}_{\mathbb{F}_q[t]}\{P_1,\dots,P_\ell\}=\ell.
        \]
        In particular, we have
        \[
            \rank_{\mathbb{F}_q[t]}E(L)=\aleph_0.
        \]
    \end{theorem}
    
    \begin{remark}
        The author thanks Matt Papanikolas for sharing the following proof which is an improvement of a previous weaker result the author obtained.
    \end{remark}
    
    \begin{proof}
        Suppose on the contrary that there exist $a_1,\dots,a_\ell\in\mathbb{F}_q[t]$ not all zero such that
        \[
            \phi_{a_1}(P_1)+\cdots+\phi_{a_\ell}(P_\ell)=0.
        \]
        By Lemma~\ref{Lem:Reduction_1}, there exists $g\in L[t]$ such that
        \[
            a_1P_1+\cdots+a_\ell P_\ell=\kappa_rg^{(r)}+\cdots+\kappa_1g^{(1)}-(t-\theta)g.
        \]
        Let $F:=a_1P_1+\cdots+a_\ell P_\ell\in L[t]$. Since $P_1,\dots,P_\ell\in L$ are linearly independent over $\mathbb{F}_q$, $P_1,\dots,P_\ell\in L[t]$ are linearly independent over $\mathbb{F}_q[t]$ by the linear disjointness of $\mathbb{F}_q[t]$ and $L$. Then $F\neq 0$ in $L[t]$ and hence $g\neq 0$ in $L[t]$.
        
        Let $v\in S$. Then
        \[
            \mathrm{ord}_v(F)=\mathrm{ord}_v(a_1P_1+\cdots a_\ell P_\ell)\geq\min\{\mathrm{ord}_v(P_i)\}_{i=1}^\ell>0.
        \]
        Thus, if we express $F=F_0+F_1t+\cdots+F_mt^m$ where $m\in\mathbb{Z}_{\geq 0}$ and $F_i\in L$ for $0\leq i\leq m$, then $\mathrm{ord}_v(F_i)>0$ for each $0\leq i\leq m$ and $v\in S$. Since $F\neq 0$ in $L[t]$, we may assume that $F_m\neq 0$. Then there exists $w\not\in S$ such that \[
            \mathrm{ord}_w(F)=\min\{\mathrm{ord}_w(F_i)\}_{i=0}^m\leq\mathrm{ord}_w(F_m)<0.
        \]
        Note that we must have $\mathrm{ord}_w(g)<0$, otherwise
        \begin{align*}
            \mathrm{ord}_w(F)&=\mathrm{ord}_w(\kappa_rg^{(r)}+\cdots+\kappa_1g^{(1)}-(t-\theta)g)\\
            &\geq\min\{\mathrm{ord}_w(\kappa_r)+q^r\mathrm{ord}_w(g),\dots,\mathrm{ord}_w(t-\theta)+\mathrm{ord}_w(g)\}\\
            &\geq 0
        \end{align*}
        which leads to a contradiction. 
        
        On the other hand, the fact that $\mathrm{ord}_w(g)<0$ implies that
        \[
            \min\{\mathrm{ord}_w(\kappa_r)+q^r\mathrm{ord}_w(g),\dots,\mathrm{ord}_w(t-\theta)+\mathrm{ord}_w(g)\}=q^r\mathrm{ord}_w(g)
        \]
        and hence $\mathrm{ord}_w(F)=q^r\mathrm{ord}_w(g)$. Now the inequality
        \[
            0>q^r\mathrm{ord}_w(g)=\mathrm{ord}_w(F)=\mathrm{ord}_w(a_1P_1+\cdots+a_\ell P_\ell)\geq 1-q^r
        \]
        leads to a contradiction because of $\mathrm{ord}_w(g)\in\mathbb{Z}$.
        
        Finally, we are going to prove that 
        \[
            \rank_{\mathbb{F}_q[t]}E(L)=\aleph_0.
        \]
        We first write $M_L\setminus S=\{w_i\}_{i\in\mathbb{Z}_{>0}}$. Then for each $j\in\mathbb{Z}_{>0}$ we define divisors of $L$
        \[
            D_j:=\sum_{v\in S}(-1)\cdot v+\sum_{i=1}^j(q^r-1)\cdot w_i.
        \]
        Let $\mathfrak{g}_L$ be the genus of $L$ and $N$ be a positive integer so that for $j>N$
        \[
            \deg(D_j)=(q^r-1)\sum_{i=1}^j\deg(w_i)-\sum_{v\in S}\deg(v)\geq 2\mathfrak{g}_L-1.
        \]
        Then by Riemann-Roch theorem for $j>N$ we have 
        \[
            \dim_{\mathbb{F}_q}\mathcal{L}(D_j)=[\mathbb{F}_L:\mathbb{F}_q]\left(\deg(D_j)-\mathfrak{g}_L+1\right),
        \]
        where $\mathbb{F}_L=L\cap\overline{\mathbb{F}}_q$ is the constant field of $L$. In particular, we have
        \[
            \mathcal{L}(D_{N+1})\subsetneq\mathcal{L}(D_{N+2})\subsetneq\cdots.
        \]
        Then we pick $Q_1\in\mathcal{L}(D_{N+1})$ and for each $i\geq 2$ we pick $Q_i\in\mathcal{L}(D_{N+i})\setminus\mathcal{L}(D_{N+i-1})$. 
        
        Since $\mathrm{Span}_{\mathbb{F}_q[t]}\{Q_i\}_{i\in\mathbb{Z}_{>0}}\subset E(L)$ and any finite non-empty subset of $\{Q_i\}_{i\in\mathbb{Z}_{>0}}$ is a $\mathbb{F}_q[t]$-linearly independent set by the result established above, we conclude that
        \[
            \aleph_0=\rank_{\mathbb{F}_q[t]}\mathrm{Span}_{\mathbb{F}_q[t]}\{Q_i\}_{i\in\mathbb{Z}_{>0}}\leq\rank_{\mathbb{F}_q[t]}E(L)\leq\aleph_0
        \]
        and hence
        \[
            \rank_{\mathbb{F}_q[t]}E(L)=\aleph_0.
        \]
    \end{proof}
    
    The following example gives an explicit construction of $\mathbb{F}_q[t]$-linearly independent set of arbitrary cardinality in the case $L=k$.
    
    \begin{example}\label{Ex:Linearly_Independent}
        Let $S\subset M_k$ be a finite collection of places in $k$ which includes $\infty$. Let
        \[
            A_S:=\{\alpha\in k\mid\mathrm{ord}_v(\alpha)\geq 0~\mbox{for all}~v\not\in S~\}\subset k.
        \]
        Let $E=(\mathbb{G}_{a/L},\phi)$ be a Drinfeld module $\mathbb{F}_q[t]$-module defined over $k$ with
        \[
            \phi_t=\theta+\kappa_1\tau+\cdots+\kappa_r\tau^r\in A_S[\tau]
        \]
        and $\kappa_r\in \mathbb{F}_q^\times$. For each $v\in M_k$, we set $f_v$ to be the monic irreducible polynomial corresponding to the place $v$. Let $f:=\prod_{v\in S\setminus\{\infty\}}f_v$ and $N:=\deg_\theta(f)$. Let $S'$ be the collection of finite places whose degree is strictly greater than $N$. Then for each $v\in S'$, we may define $P_v:=f/f_v$. It is clear that $S'$ is a countably infinite set and any finite non-empty subset of $\{P_v\}_{v\in S'}$ is $\mathbb{F}_q[t]$-linearly independent by Theorem~\ref{Thm:Linearly_Independence}. Thus, we conclude that
        \[
            \rank_{\mathbb{F}_q[t]}E(k)=\aleph_0.
        \]
    \end{example}
    
\subsection{Algebraic independence of Drinfeld logarithms at algebraic points}
    Let $E$ be a Drinfeld $\mathbb{F}_q[t]$-module of rank $r$ defined over $\mathbb{C}_\infty$. We can associate a discrete free $A$-module $\Lambda_E\subset\mathbb{C}_\infty$ of rank $r$ for $E$. Any element in $\Lambda_E$ is called a \emph{period} of $E$. For the convenience of later use, we express $\Lambda_E=A\lambda_1+\cdots A\lambda_r$ for some $\lambda_1,\dots,\lambda_r\in\Lambda_E$. 
    
    The \emph{exponential function} of $E$ is defined by the power series 
    \[
        \exp_E(z):=z\prod_{0\neq\lambda\in\Lambda_E}(1-z/\lambda).
    \]
    Note that $\exp_E(z)$ induces a surjective, entire $\mathbb{F}_q$-linear function on $\mathbb{C}_\infty$, from which we have the following analytic uniformization
    \begin{equation}\label{Eq:Drinfeld_Modules_Uniformization}
        0\to \Lambda_E\hookrightarrow\mathbb{C}_\infty\overset{\exp_\Lambda(\cdot)}{\twoheadrightarrow}E(\mathbb{C}_\infty)\to 0.
    \end{equation}
    It satisfies that $\exp_E(a(\theta)z)=\phi_a(\exp(z))$ for all $a\in\mathbb{F}_q[t]$. The \emph{logarithmic function} $\log_E$ of the Drinfeld $\mathbb{F}_q[t]$-module $E$ is defined by the formal inverse of $\exp_E$. It satisfies that $a(\theta)\log_E(z)=\log_E(\phi_a(z))$ for all $a\in\mathbb{F}_q[t]$. In general, $\log_E$ converges on a disk with center at $0$ of sufficiently small non-zero radius in $\mathbb{C}_\infty$ even though $\exp_E$ defines an entire function on $\mathbb{C}_\infty$.
    
    Recall that $K_E=\End(E)\otimes_{\mathbb{F}_q[t]}\mathbb{F}_q(t)$ is a finite extension over $\mathbb{F}_q(t)$ of degree $s$, which can be identified as a subfield of $\mathbb{C}_\infty$ via $\partial:K_E\to\mathbb{C}_\infty$ defined in the introduction. We denote by $\mathcal{K}_E:=\partial(K_E)\subset\mathbb{C}_\infty$. If we denote by $k\Lambda_E:=k\lambda_1+\cdots k\lambda_r$, then one checks directly that $\mathcal{K}_E(k\Lambda_E)\subset k\Lambda_E$. In particular, we have
    \[
        k\Lambda_E=\mathrm{Span}_{\mathcal{K}_E}\{\lambda_1,\dots,\lambda_r\}.
    \]
    It follows that
    \[
        \dim_{\mathcal{K}_E}\mathrm{Span}_{\mathcal{K}_E}\{\lambda_1,\dots,\lambda_r\}=\dim_{k}(k\Lambda_E)/[K_E:\mathbb{F}_q(t)]=r/s.
    \]
    
    As an application of Theorem~\ref{Thm:Main_Thm}, we have the following result, which provides a practical way to determine all the algebraic relations among Drinfeld logarithms at algebraic points.
    
    \begin{theorem}\label{Thm:Algebraic_Independence}
        Let $L/k$ be a finite extension and $E$ be a Drinfeld $\mathbb{F}_q[t]$-module defined over $L$ with $\End(E)=\mathbb{F}_q[t]u_1+\cdots\mathbb{F}_q[t]u_s$. Let $Q_1,\dots,Q_\ell\in\mathbb{C}_\infty$ so that $P_1:=\exp_E(Q_1),\dots,P_\ell:=\exp_E(Q_\ell)$ are distinct non-zero elements in $E(L)$. Then we have
        \[
            \dim_{\mathcal{K}_E}\mathrm{Span}_{\mathcal{K}_E}\{\lambda_1,\dots,\lambda_r,Q_1,\dots,Q_\ell\}=r/s+\rank_{\End(E)}\mathrm{Span}_{\End(E)}\{P_1,\dots,P_\ell\}.
        \]
        In particular, if we set and
        $$D:=\left(-\mathrm{div}(u_1(P_1),\dots,u_1(P_\ell),\dots,u_s(P_1),\dots,u_s(P_\ell))\right)\vee D_E$$
        and $d:=\dim_{\mathbb{F}_q}\mathcal{L}(D)$, then $Q_1,\dots,Q_\ell$ are algebraically independent over $\ok$ whenever
        \[
            \underset{1\leq j\leq\ell}{\sum_{1\leq i\leq s}}\phi_{a_{ij}}(u_i(P_j))\neq 0
        \]
        for all $a_{ij}\in\mathbb{F}_q[t]$ and $\deg_t(a_{ij})<d+s\ell$.
    \end{theorem}
    
    \begin{remark}
        Note that the cardinality of the set
        \[
            |\{\underset{1\leq j\leq\ell}{\sum_{1\leq i\leq s}}\phi_{a_{ij}}(u_i(P_j))\mid a_{ij}\in\mathbb{F}_q[t],~\deg_t(a_{ij})<d+s\ell,~1\leq i\leq s,~1\leq j\leq\ell\}|=s\ell q^{d+s\ell}
        \]
        is finite and computable. Thus, we provide an effective result to decide whether Drinfeld logarithms at algebraic points are algebraically independent.
    \end{remark}
    
    \begin{proof}
        Let $C_1,\dots,C_\ell\in\End(E)$, not all zero, so that
        \[
            C_1(P_1)+\cdots+C_\ell(P_\ell)=0.
        \]
        Then by the functorial property of $\exp_E$ \cite[Thm.~3]{And86} we have
        \[
            \exp_E((\partial C_1)Q_1+\cdots+(\partial C_\ell)Q_\ell)=0.
        \]
        In other words, $(\partial C_1)Q_1+\cdots(\partial C_\ell)Q_\ell\in\Lambda_E$, and hence
        \[
            (\partial C_1)Q_1+\cdots(\partial C_\ell)Q_\ell\in\mathrm{Span}_{\mathcal{K}_E}\{\lambda_1,\dots,\lambda_r\}.
        \]
        Since $(\partial C_1),\dots,(\partial C_\ell)\in\mathcal{K}_E$ and $\dim_{\mathcal{K}_E}\mathrm{Span}_{\mathcal{K}_E}\{\lambda_1,\dots,\lambda_r\}=r/s$, every non-trivial $\End(E)$-linear relation among $\{P_1,\dots,P_\ell\}$ lifts to a non-trivial $\mathcal{K}_E$-linear relation among $\{\lambda_1,\dots,\lambda_r,Q_1,\dots,Q_\ell\}$ that is not a non-trivial $\mathcal{K}_E$-linear relation among $\{\lambda_1,\dots,\lambda_r\}$. Thus, we conclude that
        \[
            \dim_{\mathcal{K}_E}\mathrm{Span}_{\mathcal{K}_E}\{\lambda_1,\dots,\lambda_r,Q_1,\dots,Q_\ell\}\leq r/s+\rank_{\End(E)}\mathrm{Span}_{\End(E)}\{P_1,\dots,P_\ell\}.
        \]
        
        On the other hand, let $\alpha_1,\dots,\alpha_r\in\mathcal{K}_E$ and $\beta_1,\dots,\beta_\ell\in\mathcal{K}_E$, not all zero, so that
        \[
            \sum_{i=1}^r\alpha_i\lambda_i+\sum_{j=1}^\ell\beta_jQ_j=0.
        \]
        There exists $\gamma\in A$ so that $\gamma\alpha_i\in\partial(\End(E))$ and $\gamma\beta_j\in\partial(\End(E))$. Then we have
        \[
            0=\exp_E(\sum_{i=1}^r\gamma\alpha_i\lambda_i+\sum_{j=1}^\ell\gamma\beta_jQ_j)=C_1(P_1)+\cdots+C_\ell(P_\ell)
        \]
        for some $C_1,\dots,C_\ell\in\End(E)$, not all zero, and $\partial C_j=\gamma\beta_j$. Thus, every non-trivial $\mathcal{K}_E$-linear relation among $\{\lambda_1,\dots,\lambda_r,Q_1,\dots,Q_\ell\}$ that is not a non-trivial $\mathcal{K}_E$-linear relation among $\{\lambda_1,\dots,\lambda_r\}$ lifts to a non-trivial $\End(E)$-linear relation among $\{P_1,\dots,P_\ell\}$. Therefore,
        \[
            \dim_{\mathcal{K}_E}\mathrm{Span}_{\mathcal{K}_E}\{\lambda_1,\dots,\lambda_r,Q_1,\dots,Q_\ell\}\geq r/s+\rank_{\End(E)}\mathrm{Span}_{\End(E)}\{P_1,\dots,P_\ell\},
        \]
        and the desired equality now follows.
        
        Finally, since $\End(E)$-linear relations among $\{P_1,\dots,P_\ell\}$ are those coming from $\mathbb{F}_q[t]$-linear relations among $\{u_1(P_1),\dots,u_1(P_\ell),\dots,u_s(P_1),\dots,u_s(P_\ell)\}$ and vice versa, Theorem~\ref{Thm:Main_Thm} shows that 
        \[
            \rank_{\End(E)}\mathrm{Span}_{\End(E)}\{P_1,\dots,P_\ell\}=\ell
        \]
        whenever
        \[
            \underset{1\leq j\leq\ell}{\sum_{1\leq i\leq s}}\phi_{a_{ij}}(u_i(P_j))\neq 0
        \]
        for all $a_{ij}\in\mathbb{F}_q[t]$ and $\deg_t(a_{ij})<d+s\ell$. In this case, we have
        \[
            \dim_{\mathcal{K}_E}\mathrm{Span}_{\mathcal{K}_E}\{Q_1,\dots,Q_\ell\}=\ell
        \]
        and the desired algebraic independence of $Q_1,\dots,Q_\ell$ follows immediately from \cite[Thm.~1.1.1]{CP12}.
    \end{proof}

    \begin{example}
        Let $E=(\mathbb{G}_{a/k},\phi)$ be a rank $2$ Drinfeld $\mathbb{F}_q[t]$-module defined over $k$ with $\phi_t=\theta+(1/\theta)\tau+\tau^2$. One computes directly that the $j$-invariant of $E$ is
        $$j(E)=\left(\frac{1}{\theta}\right)^{q+1}\in k\setminus A.$$
    
        Since $A$ is integrally closed in $k$, $j(E)$ is not integral over $A$ and it implies that $E$ is a Drinfeld $\mathbb{F}_q[t]$-module without complex multiplication since the $j$-invariant of Drinfeld $\mathbb{F}_q[t]$-modules with complex multiplication must be an algebraic integer \cite{Gek83} (see \cite{Ham03} for higher rank case). Now we consider the case $q=2$ and two points $P_1=\theta,~P_2=\theta+1$ on the Drinfeld $\mathbb{F}_q[t]$-module $E$. By \cite[Cor.~4.2]{EGP13}, one checks directly that $\log_E$ converges at $P_1$ and $P_2$.
    
        We aim to show that $\log_E(\theta)$ and $\log_E(\theta+1)$ are algebraically independent over $\overline{k}$. Following the same notation given in Theorem~\ref{Thm:Main_Thm} and the proof of Theorem~\ref{Thm:Reduction_2}, one checks directly that
        $$\mathrm{div}(P_1,P_2)=(-1)\cdot(\infty),~\mbox{and}~D_E=1\cdot(\theta)+1\cdot(\infty).$$ 
        Hence we get the divisor 
        $$D=\left(-\mathrm{div}(P_1,P_2)\right)\vee D_E=1\cdot(\infty)+1\cdot(\theta).$$
        Then one may use Riemann-Roch theorem to conclude that $\dim_{\mathbb{F}_q}\mathcal{L}(D)=3$. In fact, one has 
        $$\mathcal{L}(D)=\mathbb{F}_q+\mathbb{F}_q\cdot\theta+\mathbb{F}_q\cdot\frac{1}{\theta}.$$
        Thus by applying Theorem~\ref{Thm:Main_Thm}, we know that if $P_1$ and $P_2$ are $\mathbb{F}_q[t]$-linearly dependent, then we can find generators of their $\mathbb{F}_q[t]$-linear relations with degree less than or equal to
        $d+\ell=\dim_{\mathbb{F}_q}\mathcal{L}(D)+2=5$. Then one checks by using SageMath that the quantity $\phi_{a_1}(P_1)+\phi_{a_2}(P_2)$ is non-vanishing for all $a_i\in\mathbb{F}_q[t]$, not all zero, with $\deg_t(a_i)<5$, $i=1,2$. Consequently, we deduce that $\theta,~\theta+1$ are $\mathbb{F}_q[t]$-linearly independent in $E(k)$ and hence $\log_E(\theta),~\log_E(\theta+1)$ are linearly independent over $k$. Note that the field of fractions of $\End(\phi)=A$ coincides with $k$. Finally, we apply \cite[Thm.~1.1.1]{CP12} to conclude that $\log_E(\theta)$ and $\log_E(\theta+1)$ are algebraically independent over $\overline{k}$ as desired.
    \end{example}

\bibliographystyle{alpha}

\end{document}